\documentclass[lettersize,journal]{IEEEtran}
\usepackage{amsmath}
 \usepackage{amsthm}
\usepackage{amsfonts}
\usepackage{algorithmic}
\usepackage{algorithm}
\usepackage{array}
\usepackage[caption=false,font=normalsize,labelfont=sf,textfont=sf]{subfig}
\usepackage{textcomp}
\usepackage{booktabs}
\usepackage{stfloats}
\usepackage{url}
\usepackage{verbatim}
\usepackage{multirow}
\usepackage{multicol}
\usepackage{graphicx}
\usepackage{cite}
\usepackage{xcolor}
\usepackage{hyperref}

\usepackage{float}
\usepackage{booktabs} 
\usepackage[export]{adjustbox}

\newcommand{\xkik}{{x}_{k,i_{k}}} 
\newcommand{\hxkik}{\hat{x}_{k,i_{k}}}

\DeclareMathOperator*{\argmin}{arg\,min}

\newtheorem{remark}{Remark}
\newtheorem{lem}{Lemma}

\newtheorem{thm}{Theorem}
\newtheorem{assum}{Assumption}

\title{Regularized coordinate minimization for nonconvex composite optimization with application to quantized image compression}

\author{Daniela Lupu$^{1,2}$, George  T. Samoila$^{1}$,  Adina M. Florea$^{3}$, Ion Necoara$^{1,4}$
\thanks{*The research leading to these results has received funding from:  project Romanian Hub for Artificial Intelligence - HRIA, Smart Growth, Digitization and Financial Instruments Program, 2021-2027, SMIS no. 2021- 351416.}
\thanks{$^{1}$Automatic Control and Systems Engineering Department, Politehnica  Bucharest, 060042 Bucharest, Romania. Corresponding author email: {\tt\small  ion.necoara@upb.ro.}}%
	
\thanks{$^{2}$ Precis Research Institute, Politehnica  Bucharest, 060042 Bucharest. }

\thanks{$^{3}$  Department of Computer Science,  Politehnica  Bucharest, 060042 Bucharest, Romania. }
    
\thanks{$^{4}$Gheorghe Mihoc-Caius Iacob Institute of Mathematical Statistics and Applied Mathematics of the Romanian Academy, 050711 Bucharest, Romania.}
}

\begin{document}

\maketitle

\begin{abstract} This paper presents  a regularized cyclic coordinate minimization method for solving  nonconvex composite  optimization problems having  the  objective function formed as the sum of two  terms, one is twice continuously differentiable  and the second  term is simple and  separable.  We analyze the  convergence  behaviour of  our  coordinate minimization  method, in particular we provide convergence rates to a first-order optimality criterion and objective residual  depending on the assumptions on the problem. Then, we show that our algorithmic framework can be efficiently applied for solving quantized matrix factorization problems that arise in e.g.,  lossy image compression. More specifically, on KODAK and  CLIC 2024 datasets,  our method notably outperforms JPEG at low bit rates, achieving savings in bits per pixel without excessive degradation and remains comparable at higher bit rates. Moreover, we evaluate both  the original and  quantized compressed images in a classification task using 3 well-known convolutional networks AlexNet, ResNet50 and MobileNetV2 based on floating point and integer arithmetic representations. Remarkably, when comparing the two numerical representations of the convolutional networks, there is no substantially accuracy degradation on  ImageNet dataset. The numerical results on image compression and classification using real data show the flexibility and  efficiency of our  algorithm compared to well-established methods from the literature. 
\end{abstract}


\section{Introduction}
\noindent In many applications ranging from signal processing  (e.g., dimensionality reduction, data compression) \cite{Hou:15} to distributed control  \cite{NecCli:13}, one needs to solve  large-scale nonconvex composite optimization problems. However, for large-scale nonconvex  optimization,  the traditional methods based on full gradient/prox  and on Hessian  computations are prohibitive. In this case, a reasonable approach to solve such problems  is to use  (block) coordinate descent methods as each iteration is cheap since it involves optimizing a small-dimensional subproblem, see~\cite{Ber:99}. Comprehensive  surveys on coordinate descent algorithms can be found in e.g., \cite{Nec-cd17,wright2015coordinate}.

\medskip 

\noindent The variants of  (block) coordinate descent algorithms differ from each other in the way  we define the local approximation function over which we optimize  and the criterion of choosing at each iteration the subspace over which we minimize this local approximation function.  For updating one (block) variable, while keeping the other variables fixed,  two basic choices for the local approximation function  are usually considered: (i) exact approximation function, leading to \textit{coordinate minimization methods} \cite{Ber:99, NecCho:24},  and (ii)  quadratic approximation function, leading to \textit{coordinate (proximal) gradient}  \textit{descent   methods} \cite{BecTet:13,NecCli:16,NecCho:24,TseYun:09}.  Furthermore, three classical criteria for choosing the coordinate search  used often in these algorithms are the greedy, the cyclic and the random coordinate search, which differ significantly in the computational effort required to select the next coordinate. For cyclic coordinate search rates of convergence have been given only recently \cite{BecTet:13,Teb:14}. Convergence rates for coordinate descent methods based on the Gauss-Southwell rule  were given e.g.  in \cite{TseYun:09}. Finally, a third  approach is based on stochastic coordinate descent, where the coordinate search is random. Recent complexity results on stochastic coordinate descent methods were obtained  e.g., in \cite{FerRic:15,NecCli:16} in the convex settings and then extended to nonconvex problems as well, see e.g.,  \cite{NecCho:24}. 

\medskip 

\noindent \textit{Contributions.} This paper deals with nonconvex composite  optimization problems having  the  objective function formed as the sum of two  terms, one is twice continuously differentiable  and the second  term is simple and (block) separable, both of which may be nonconvex.  Under these settings we design a new  \textit{regularized cyclic  coordinate minimization} method which takes into account the composite form of the objective function. This algorithm achieves scalability by minimizing at each iteration a local model of the whole nonseparable  objective function along  a  subspace with a user-determined dimension.   We provide convergence  analysis for our  coordinate minimization  method, in particular we prove worst-case bounds in terms of the number of iterations for achieving approximate first-order optimality and objective residual. Our convergence rates range between sublinear to (super)linear depending on whether our objective function is general or satisfies  a  Kurdyka-Lojasiewicz (KL) property.  Note that our algorithm is different from e.g., the proximal alternating  linearized  algorithm  from \cite{Teb:14}, which minimizes a quadratic approximation of the composite objective at each iteration based on the linearization of  the differentiable term along a subspace and an estimate of the coordinatewise Lipschitz constant on that subspace.  On the other hand, our method uses a local model of the whole objective along a subspace instead of a quadratic approximation and does not need to know any Lipschitz constant estimates. 

\medskip 

\noindent Then, we show that quantized matrix factorization (QMF) problems fit into our algorithmic framework, leading to a method with simple (closed-form) iterations and enjoying fast convergence.  QMF is an extension of the traditional  matrix factorization \cite{gillis2020nonnegative}  that explicitly incorporates quantization constraints  into the optimization problem (i.e., the  elements of the factor matrices are constrained to a set of bounded integer values), see \cite{Fre:14,fedotov2025qmf, qmf}.   The quantization step allows for a more effective low-rank approximation for data compression tasks, such as image compression, being useful when dealing with storing and transmission tasks, particularly when there are hardware limitations. For example, in lossy image compression, quantization  reduces storage size by eliminating redundant information and details that are less visually detected by humans. Our extensive numerical results using  Kodak \cite{kodak1993} and  CLIC 2024 \cite{clic2024}  datasets, confirm the   efficiency of our coordinate minimization  algorithm on solving QMF problems  for lossy image compression, producing bits per pixel (bpp) savings without excessive degradation. Remarkably, our method outperforms JPEG and quantized compression algorithm in \cite{qmf} in several metrics (PSNR and SSIM) at low bit rates ($<0.3$ bpp)  and remains comparable at higher bit~rates.

\medskip 

\noindent We also investigate the impact of the  quantized image compression in  learning tasks such as classification. Driven by the need to improve deep neural networks (DNNs) speed and cost-effectiveness, especially when deployed on hardware with limited capabilities, there is growing demand for frameworks that support low-precision arithmetic. Hence, we consider both, floating point and integer arithmetic representations for some widely used convolutional neural network architectures such as AlexNet \cite{alexnet}, ResNet50 \cite{resnet} and MobileNetV2 \cite{mobilenetv2}. For inference we use the original ImageNet  validation  dataset (50.000 images) \cite{deng2009imagenet},  but also the quantized compressed ones and evaluate the models accuracy on  the original DNNs and the  post training quantized version of these networks \cite{gholami2022survey, wang2022niti}.  Comparing the two numerical representations of the convolutional networks AlexNet, ResNet50  and MobileNetV2, one can notice that there is no substantially accuracy degradations on (quantized and compressed) ImageNet dataset.

\medskip

\noindent Hence, our key contributions can be summarized as follows:
\begin{itemize}
    \item We propose a new scalable regularized cyclic coordinate minimization method for nonconvex composite optimization problems, where the objective is the sum of a twice continuously differentiable term and a simple block-separable term, both potentially nonconvex.
    \item We establish convergence guarantees for the algorithm, including iteration-complexity bounds for reaching approximate first-order stationary points and objective residual.
    \item Then, we show that our algorithm can efficiently  solve the quantized matrix factorization (QMF) problem that can be used, e.g.,  in lossy image compression tasks. We show that our method when applied to QMF  has very simple iterations (matrix-vector operations) and fast convergence. The numerical results on image compression and classification using real data show the flexibility and  efficiency of our optimization  algorithm.      
\end{itemize}

\medskip

\noindent \textit{Notations.}
We denote with $\mathbb{Z}$ and  $\mathbb{R}$ the set of integer and real numbers and $\mathbb{Z}_{[\alpha, \beta]} = [\alpha, \beta] \cap \mathbb{Z}$. For a given vector $x \in \mathbb{R}^n$, its projection  onto set $\mathcal{X}$ is denoted $[x]_{\mathcal{X}}$. For a scalar $y \in \mathbb{R}$, we denote  $[y]_{\mathbb{Z}_{[\alpha, \beta]}} = \min(\beta, \max(\text{round}(y),\alpha)) = \text{clamp}(\alpha, \beta,(\text{round}(y))$, where $\text{round}(y)$ returns the integer value closest to $y$. Let $E \in \mathbb{R}^{n \times n}$ be  a column permutation of the $n \times n$ identity matrix and  let $E = [E_{1},...,E_{N}]$ be a decomposition of $E$ into $N$ submatrices, with $E_{i} \in \mathbb{R}^{n \times n_{i}}$, where $\sum_{i = 1}^{N}n_{i} = n$. Then, any vector $x \in \mathbb{R}^n$ can be written uniquely as $x = \sum_{i = 1}^{N} E_{i} x^{(i)}$, where $x^{(i)} = E_{i}^{T}x \in \mathbb{R}^{n_{i}}$. For a differentiable function $f :  \mathbb{R}^{n} \to  \mathbb{R}$, $\nabla f(x) \in \mathbb{R}^{n}$ denotes the gradient of $f$ at $x$, $\nabla_{j} f(x) \in \mathbb{R}$ denotes the $j$th component of the gradient, while $\nabla_{(i)} f(x) = E_i^T \nabla f(x) \in \mathbb{R}^{n_{i}}$ denotes the $i$th block component of the gradient. We denote $\nabla^2 f(x)$  the Hessian of $f$. We also denote $\partial f(x)$ the limiting subdifferential (or simply the subdifferential) of $f$ at $x$.

\medskip

\noindent \textit{Content.} In Section 2 we introduce our optimization problem and the main assumptions followed  by the presentation of the new coordinate minimization algorithm and its convergence analysis. In Section 3 we show that  the quantized matrix factorization problem can be recast into our algorithmic framework, while Section 4 provides detailed numerical results for image compression  and its efficiency in a classification task.


\section{Regularized coordinate minimization for nonconvex composite optimization}
\noindent In this section we present our problem of interest and the main assumptions, then we introduce a new coordinate descent method and analyze its convergence properties. 

\subsection{Problem formulation}
\noindent We consider the following nonconvex composite optimization problem (see also \cite{Teb:14, NecCho:24}): 
\begin{equation} 
\label{eq:prob}
F^* = \min_{x \in \mathbb{R}^n} F(x):= f(x) + g(x),
\end{equation}
where  $f$ is a differentiable (possibly nonconvex) function and $g$ is a proper lower semi-continuous function (e.g., the indicator function of a nonconvex set). Such problems arise in many applications ranging e.g., from signal processing   \cite{Hou:15} to distributed control \cite{NecCli:13}.  Throughout the paper the following assumptions are valid for the  composite  problem \eqref{eq:prob}:

\begin{assum} 
\label{ass1}
(i) Function  $f$ is twice continuously differentiable, function $g$ is proper, lower semi-continuous and separable, i.e.:
\[  g(x) = \sum_{i=1}^N g_i(x^{(i)}),    \]
and for any given $\hat y = (y^{(1)},\cdots, y^{(i-1)},  y^{(i+1)},\cdots y^{(N)})$ 
\[ \inf_{x^{(i)} \in \mathbb{R}^{d_i}}  f(x^{(i))}, \hat y) + g_i (x^{(i)}) > - \infty. \]  
(ii) A solution exists for \eqref{eq:prob}  (hence,   $F^* > -\infty$).
\end{assum}
   
\noindent  Optimization problems having the composite structure \eqref{eq:prob}
permit to handle general coupling functions (e.g.,  $f(x^{(1)}, x^{(2)})= \|(x^{(1)})^TA x^{(2)} - y\|^p$, with $A$ linear operator and $p\geq 2$). In particular, this model satisfying Assumption 1 is suitable for applications where for the first term, although possibly smooth, the computations of its gradient or of the coordinatewise Lipschitz constants over a bounded set along  subspaces generated by $E_i$'s are expensive or even impossible; on the other hand the computation of the prox  along a subspace is easy.  Let us fix some notations.  For a given iteration counter $k$, we denote $x_{k,0} = x_{k}$ and then denote by
    \vspace{-0.2cm}
    \begin{align*}
    & \xkik= \left(x_{k+1}^{(1)},\cdots,x_{k+1}^{(i_{k})},x_{k}^{(i_{k}+1)}, \cdots, x_{k}^{(N)}\right),\\
   & \hxkik = \left(x_{k+1}^{(1)},\cdots,x_{k+1}^{(i_{k}-1)},x_{k}^{(i_{k}+1)}, \cdots, x_{k}^{(N)}\right).
    \vspace{-0.2cm}
    \end{align*}


\subsection{Regularized coordinate minimization algorithm}
\noindent In this section, we present a \textit{Regularized Cyclic Coordinate  Minimization}  (RCCM) algorithm for solving problem \eqref{eq:prob},  with $f$  possibly nonseparable and nonconvex, while $g$ is separable and nonconvex. The basic idea of our algorithm consists of updating  the $i$th component of  $x\in\mathbb{R}^{n}$   in a cyclic manner  using  a local model of the whole nonseparable  objective function along  a  subspace in contrast to a quadratic approximation along that subspace as considered in \cite{Teb:14}. 

\begin{algorithm}
\caption{RCCM} \label{alg:RCPG}
		\begin{algorithmic} 	
			 \STATE Given the regularization parameters $M_i\geq 0 \;\;  \forall i=1:N$ and a starting point  $x_{0} \in \text{dom}\,g$
			\FOR{$k \geq 0$}
			\FOR{$i_{k} = 1:N$}
				     \STATE Compute: $x_{k+1}^{(i_{k})} \in$
                     \begin{align}  \label{eq_subpr}
                      \hspace{-0.7cm}\argmin_{x^{(i_k)} \in \mathbb{R}^{n_{i_k}} }  f(x^{(i_k)},  \hat x_{k, i_k}) \!+\!  g_{i_k} (x^{(i_k)})\!+\! \frac{M_{i_k}}{2} \|x^{(i_k)}\! -\! x_k^{(i_k)}\|^2  
                     \end{align}
             \vspace{-0.5cm}        
			\ENDFOR
            \STATE Update: $x_{k+1} = x_{k,N} = \left(x_{k+1}^{(1)},\cdots,x_{k+1}^{(N)}\right)$.
			\ENDFOR
		\end{algorithmic} 
	\end{algorithm}

\noindent Note that the RCCM algorithm  is different from e.g., proximal alternating  linearized minimization (i.e., proximal coordinate gradient) algorithm  from \cite{Teb:14} in both, assumptions and iterations. Indeed, paper \cite{Teb:14} assumes that the smooth term $f$ has coordinatewise Lipschitz gradient, while we just need to assume twice differentiability of $f$. Moreover, \cite{Teb:14} linearizes the smooth term $f$ along a subspace and uses an estimate of the coordinatewise Lipschitz constant on that subspace to produce the next iterate. Our method however  uses a local model of the whole objective along a subspace instead of a quadratic approximation and does not need to know any Lipschitz constant estimates.  It should be observed that Assumption 1 (i) ensures that the set of minima of  subproblem \eqref{eq_subpr} is nonempty and compact, see  \cite{Teb:14} and, consequently, the RCCM  algorithm is well defined.  We also assume in the sequel that the subproblem \eqref{eq_subpr} is easy to solve, e.g., it has a closed-form solution (see the quantization-based matrix factorization application from the next section).


\subsection{Convergence analysis of RCCM algorithm}
\noindent In this section we analyze the convergence behavior of the RCCM algorithm. Let us denote by $M_{\min} = \min (M_{1},\cdots, M_N)$ and $M_{\max} = \max (M_{1},\cdots, M_N)$. Next lemma shows that the objective function of \eqref{eq:prob} \textit{strictly decreases} along the iterations of RCCM algorithm. 

\begin{lem}
		\label{lemma:1}
		If Assumption 1 holds and  the regularization parameters $M_i> 0 \;\;  \forall i=1:N$, then the iterates of the RCCM algorithm  satisfy the following strict descent:
		\begin{align*}
			F(x_{k+1}) \leq F(x_{k}) - \dfrac{M_{\min}}{2} \|x_{k+1} - x_{k}\|^2. 
		\end{align*}
        We also have $\sum_{k=1}^\infty \|x_{k+1} - x_{k}\|^2 < \infty$ and thus $\lim_{k \to \infty} \|x_{k+1} - x_{k}\| =0$.  
\end{lem}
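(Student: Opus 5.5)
The plan is to compare, at each inner step, the value of the subproblem \eqref{eq_subpr} at its minimizer with its value at the current block, then telescope over one cycle and finally over all $k$. Fix $k$ and an inner index $i_k\in\{1,\dots,N\}$. Because $x_{k+1}^{(i_k)}$ minimizes the objective of \eqref{eq_subpr}, and $x^{(i_k)}=x_k^{(i_k)}$ is a feasible competitor (it lies in $\mathrm{dom}\,g_{i_k}$, since $x_0\in\mathrm{dom}\,g$ and the descent we are proving keeps every iterate in $\mathrm{dom}\,g$ by induction), we get
\begin{align*}
f(x_{k+1}^{(i_k)},\hat x_{k,i_k}) + g_{i_k}(x_{k+1}^{(i_k)}) + \frac{M_{i_k}}{2}\|x_{k+1}^{(i_k)}-x_k^{(i_k)}\|^2 \leq f(x_k^{(i_k)},\hat x_{k,i_k}) + g_{i_k}(x_k^{(i_k)}).
\end{align*}
Next add to both sides the blocks $g_j$, $j\neq i_k$, evaluated at the frozen components in $\hat x_{k,i_k}$. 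This uses the separability of $g$ from Assumption \ref{ass1}. The left side becomes $F(x_{k,i_k})$, and the right side becomes $F(x_{k,i_k-1})$, with the convention $x_{k,0}=x_k$. Hence $F(x_{k,i_k}) \le F(x_{k,i_k-1}) - \frac{M_{i_k}}{2}\|x_{k+1}^{(i_k)}-x_k^{(i_k)}\|^2$.

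Summing over $i_k=1,\dots,N$ telescopes to
\begin{align*}
F(x_{k+1}) \le F(x_k) - \sum_{i=1}^N \frac{M_i}{2}\|x_{k+1}^{(i)}-x_k^{(i)}\|^2 \le F(x_k) - \frac{M_{\min}}{2}\|x_{k+1}-x_k\|^2,
\end{align*}
where the last step uses $M_i\ge M_{\min}$ and $\|x\|^2=\sum_i\|x^{(i)}\|^2$. The latter identity holds because $E$ is a permutation matrix, so the block decomposition is orthogonal. This gives the first claim.

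For the summability, sum the descent inequality from $k=0$ to $K$:
\begin{align*}
\frac{M_{\min}}{2}\sum_{k=0}^K\|x_{k+1}-x_k\|^2 \le F(x_0)-F(x_{K+1}) \le F(x_0)-F^*.
\end{align*}
Here $F^*>-\infty$ by Assumption \ref{ass1}(ii) and $F(x_0)<\infty$ since $x_0\in\mathrm{dom}\,g$. Since $M_{\min}>0$, let $K\to\infty$ to obtain that the series converges, and therefore that its terms tend to zero.

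The only delicate point is bookkeeping rather than analysis. One must check that the frozen components in $\hat x_{k,i_k}$ are exactly those of $x_{k,i_k-1}$, so that the right-hand side of the first inequality is genuinely $F(x_{k,i_k-1})$ and the chain telescopes. One must also confirm that the minimizer exists, which the paper notes follows from Assumption \ref{ass1}(i). No smoothness of $f$ is needed for this lemma.
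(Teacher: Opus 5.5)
Your proof is correct and follows the paper's route. You compare the subproblem value at the minimizer against the competitor $x_k^{(i_k)}$, telescope over the $N$ inner steps (you telescope $F$ block by block, while the paper telescopes $f$ and adds the $g_{i}$ terms, which amounts to the same thing), then sum the descent inequality over $k$ and use $F^*>-\infty$. Your extra bookkeeping adds rigor the paper's proof lacks: tracking $\mathrm{dom}\,g$, noting the orthogonality of the block decomposition, and summing to an arbitrary $K$ (where the paper's text mistakenly writes $N-1$).
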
	

\begin{proof}
First, one can notice that:
\[  \sum_{i=1}^N f(x_{k,i}) - f(x_{k,i-1}) = f(x_{k+1}) - f(x_k).   \]
Second,  using the optimality  of  $x_{k+1}^{(i_{k})}$ we obtain:
\begin{align*}  
& f(\xkik) + g_{i_k}(x_{k+1}^{(i_k)}) + \frac{M_{i_k}}{2} \|x_{k+1}^{(i_k)} - x_k^{(i_k)}\|^2  \\
& \leq f(x_{k,i_k-1}) + g_{i_k}(x_{k}^{(i_k)}) \quad \forall i_k=1:N. 
\end{align*}
Adding the previous relation  from $i_k=1:N$ and combining with the first relation, we get
\begin{align*}
F(x_{k+1}) \leq  F(x_{k}) -   \sum_{i=1}^N  \frac{M_{i}}{2} \|x_{k+1}^{(i)} - x_k^{(i)}\|^2,  
\end{align*}
which yields the first statement. Further, for any positive integer $K$, summing the previous descent relation from $k=0$ to $k=N-1$ we also get
\[ \dfrac{M_{\min}}{2} \sum_{k=0}^N  \|x_{k+1} - x_{k}\|^2 \leq F(x_0) - F(x_N) \leq F(x_0) - F^*.  \]
Since $M_{\min} >0$, the second statement also follows. 
\end{proof}

\noindent Next lemma establishes a relation between the subdifferential of the objective function and the  difference between consecutive iterates. 
\begin{lem}
		\label{lemma:2}
Let Assumption 1 (i) hold.  Then, we have:  
		\begin{align*}
			\left[\!\nabla_{(i)} \! f(x_{k+1}) \!-\! \nabla_{(i)} \!f(x_{k,i}) -\! M_i \!\left(x_{k+1}^{(i)} \!\!-\! x_k^{(i)}\right)\!\right]_{i=1}^N \!\!\in\! \partial F(x_{k+1}).  
		\end{align*} 
\end{lem}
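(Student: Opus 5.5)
The plan is to write down the first-order optimality condition of each subproblem \eqref{eq_subpr} and then assemble the block pieces using the separability of $g$. Fix $i\in\{1,\dots,N\}$. The point $x_{k+1}^{(i)}$ is a global minimizer of
\[ \phi_i(u) := f(u,\hat x_{k,i}) + g_i(u) + \tfrac{M_i}{2}\|u - x_k^{(i)}\|^2 . \]
By Fermat's rule for the limiting subdifferential, $0\in\partial\phi_i(x_{k+1}^{(i)})$.

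The map $u\mapsto f(u,\hat x_{k,i}) + \tfrac{M_i}{2}\|u-x_k^{(i)}\|^2$ is continuously differentiable, since $f$ is $C^2$ by Assumption 1(i). The sum rule (smooth plus proper lsc) therefore gives
\[ -\nabla_{(i)} f(x_{k,i}) - M_i\bigl(x_{k+1}^{(i)} - x_k^{(i)}\bigr) \in \partial g_i(x_{k+1}^{(i)}). \]
Here the partial gradient of $u\mapsto f(u,\hat x_{k,i})$ at $u=x_{k+1}^{(i)}$ is $\nabla_{(i)}f(x_{k,i})$, because $x_{k,i}$ is exactly the point whose $i$th block is $x_{k+1}^{(i)}$ and whose other blocks are those of $\hat x_{k,i}$.

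Next, use the separable structure $g(x)=\sum_i g_i(x^{(i)})$. For a separable proper lsc function, the product rule for the limiting subdifferential gives $\partial g(x) = \partial g_1(x^{(1)})\times\cdots\times\partial g_N(x^{(N)})$, in the block coordinates induced by $E$. Stacking the inclusions above for $i=1,\dots,N$ yields
\[ \Bigl[-\nabla_{(i)} f(x_{k,i}) - M_i\bigl(x_{k+1}^{(i)}-x_k^{(i)}\bigr)\Bigr]_{i=1}^N \in \partial g(x_{k+1}). \]

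Finally, apply the sum rule once more, now to $F=f+g$ at $x_{k+1}$. Since $f$ is $C^1$, $\partial F(x_{k+1}) = \nabla f(x_{k+1}) + \partial g(x_{k+1})$. Adding $\nabla f(x_{k+1}) = [\nabla_{(i)}f(x_{k+1})]_{i=1}^N$ blockwise to the vector above gives exactly the claimed element of $\partial F(x_{k+1})$.

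The only delicate point is the justification of the subdifferential calculus. Fermat's rule and the smooth-plus-lsc sum rule are standard for the limiting subdifferential; I would cite Rockafellar--Wets (Thm.~10.1, Exercise~8.8(c), Prop.~10.5 for separable sums). Otherwise the argument is bookkeeping: the gradient in the $i$th optimality condition must be evaluated at the intermediate point $x_{k,i}$, not at $x_{k+1}$. That mismatch is precisely what produces the difference term $\nabla_{(i)}f(x_{k+1})-\nabla_{(i)}f(x_{k,i})$.
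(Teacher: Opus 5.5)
Your proposal is correct and follows essentially the same route as the paper: you write the optimality condition of each block subproblem, with the gradient evaluated at the intermediate point $x_{k,i}$. You then assemble the blocks via the smooth-plus-lsc sum rule and the separability of $g$, which gives $\partial F(x_{k+1}) = \nabla f(x_{k+1}) + \partial g_1(x_{k+1}^{(1)})\times\cdots\times\partial g_N(x_{k+1}^{(N)})$, and your explicit appeal to Fermat's rule, the sum rule and the separable product rule (Rockafellar--Wets) just makes precise the ``basic calculus rules'' the paper invokes.
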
	

\begin{proof}
Using the optimality condition for $x_{k+1}^{(i_{k})}$ and differentiability of $f$, we get:
\[  0 \in  \nabla_{(i_k)} f(x_{k,i_k}) + \partial g_{i_k} ( x_{k+1}^{(i_{k})}) + \! M_{i_k} \!\left(x_{k+1}^{(i_k)} \!- x_k^{(i_k)}\right),  \]
that is
\[  - \left(  \nabla_{(i_k)} f(x_{k,i_k}) + M_{i_k} \!\left(x_{k+1}^{(i_k)} \!- x_k^{(i_k)}\right) \right) \in  \partial g_{i_k} ( x_{k+1}^{(i_{k})}). \] 
Combining this relation with  basic calculus rules for subdifferentiability and  Assumption 1 (i), we finally get:
\begin{align*}
& \partial F(x_{k+1})  = \nabla f(x_{k+1}) +  \partial g(x_{k+1}) \\
& = \left[ \nabla_{(i)} f(x_{k+1}) +  \partial g_i(x_{k+1}^{(i)}) \right ]_{i=1}^N \\
& \ni \left[ \nabla_{(i)} f(x_{k+1}) - \left(  \nabla_{(i)} f(x_{k,i}) + M_{i} \!\left(x_{k+1}^{(i)} \!- x_k^{(i)}\right) \right)  \right ]_{i=1}^N, 
\end{align*}
which proves the statement of the lemma. 
\end{proof}

\noindent Next lemma establishes a relation between the norms of the  subdifferential of the objective function and of the difference between consecutive iterates.  
\begin{lem}
		\label{lemma:3}
Let Assumption 1 (i) hold and assume that the iterates of the RCCM algorithm  are bounded.  Then, there exists $C>0$ such that 
		\begin{align*}
			\min_{s_{k+1} \in \partial F(x_{k+1})} \|s_{k+1}\| \leq C \|x_{k+1} - x_{k}\|.   
		\end{align*} 
\end{lem}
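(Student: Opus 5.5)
The plan is to use the explicit element of $\partial F(x_{k+1})$ from Lemma~\ref{lemma:2}. Since the minimum over $\partial F(x_{k+1})$ is at most the norm of any particular element, it suffices to bound
\[ s_{k+1} := \left[\nabla_{(i)} f(x_{k+1}) - \nabla_{(i)} f(x_{k,i}) - M_i\left(x_{k+1}^{(i)} - x_k^{(i)}\right)\right]_{i=1}^N . \]
By the triangle inequality, and since the blocks are orthogonal,
\[ \|s_{k+1}\| \leq \sum_{i=1}^N \|\nabla_{(i)} f(x_{k+1}) - \nabla_{(i)} f(x_{k,i})\| + M_{\max}\|x_{k+1}-x_k\| . \]

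The key step is a Lipschitz bound on $\nabla f$ over a bounded region. By assumption the iterates $x_k$ lie in some bounded set. Every intermediate point $x_{k,i}$ has its blocks taken either from $x_{k+1}$ or from $x_k$, so it lies in the box $\mathcal{B}$ obtained as the product of the block-wise projections of a ball containing all iterates. That box is bounded, hence its closed convex hull $\mathcal{K}$ is compact. Since $f$ is $C^2$ (Assumption 1(i)), the Hessian is continuous, so $L := \max_{z\in\mathcal{K}}\|\nabla^2 f(z)\| < \infty$. The mean value theorem in integral form then gives $\|\nabla f(u)-\nabla f(v)\|\le L\|u-v\|$ for all $u,v\in\mathcal{K}$, because the segment between them stays in the convex set $\mathcal{K}$.

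Next I would compare $x_{k+1}$ and $x_{k,i}$. They agree on blocks $1,\dots,i$ and differ only on blocks $i+1,\dots,N$, where $x_{k,i}$ carries $x_k^{(j)}$. Hence $\|x_{k+1}-x_{k,i}\|\le\|x_{k+1}-x_k\|$. Combining this with the Lipschitz bound gives $\|\nabla_{(i)} f(x_{k+1}) - \nabla_{(i)} f(x_{k,i})\|\le L\|x_{k+1}-x_k\|$. Summing over $i$ yields the claim with $C = NL + M_{\max}$. One could sharpen this to $C=\sqrt{N}L+M_{\max}$ by using the block norm, but that is not needed.

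The main obstacle is the compactness and convexity step: boundedness is assumed only for the iterates $x_k$, not for the intermediate points $x_{k,i}$ or the segments between points. This is handled by the product-box construction above, which is convex and bounded. Beyond that, the argument is routine.
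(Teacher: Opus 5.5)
Your proposal is correct and is essentially the paper's proof. Both take the explicit subgradient from Lemma~\ref{lemma:2}, split off $M_{\max}\|x_{k+1}-x_k\|$ by the triangle inequality, bound the gradient differences by a mean value argument with a Hessian bound over a bounded set, and use $\|x_{k+1}-x_{k,i}\|\le\|x_{k+1}-x_k\|$. The differences are minor: the paper applies the scalar mean value theorem row by row to get $C=C_f\sqrt{n}+M_{\max}$, while you use the integral form to get $C=NL+M_{\max}$, and your product-box argument makes explicit why the intermediate points $x_{k,i}$ and the segments $[x_{k,i},x_{k+1}]$ stay in a compact convex set, which the paper leaves implicit.
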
	

\begin{proof}  
From Lemma 	\ref{lemma:2} we have:
\begin{align*}
& \min_{s_{k+1} \in \partial F(x_{k+1})} \|s_{k+1}\| \\
& \leq  \left \| \left[\nabla_{(i)} f(x_{k+1}) -\! \nabla_{(i)} f(x_{k,i}) -\! M_i \!\left(x_{k+1}^{(i)} \!- x_k^{(i)}\right)\!\right]_{i=1}^N \right \| \\ 
& \leq \left \| \left[\nabla_{(i)} f(x_{k+1}) -\! \nabla_{(i)} f(x_{k,i}) \right]_{i=1}^N \right \|  \\
& \qquad \qquad + \left \| \left[ \! M_i \!\left(x_{k+1}^{(i)} \!- x_k^{(i)}\right)\!\right]_{i=1}^N \right \| \\
& \leq \left \| \left[\nabla_{(i)} f(x_{k+1}) -\! \nabla_{(i)} f(x_{k,i}) \right]_{i=1}^N \right \|  \!+\!  M_{\max} \! \left \| x_{k+1} \!- x_k \right \|.  
\end{align*}
 Let us now introduce the notation:  
\begin{equation}
	\bar{\nabla}^2 f (z{(1)},\cdots,z{(d)}) = \begin{bmatrix}
		\nabla_{1}^2 f (z{(1)})  \\
		\vdots \\		
		\nabla_{d}^2 f (z{(d)})
	\end{bmatrix}, 
\end{equation}	
\noindent with $\nabla_{j}^2 f (z{(j)})$ being the $j$th row of the hessian of $f$ at the point $z{(j)} \in \mathbb{R}^{n}$. By the mean value theorem, we have that there exist $d$ vectors $z_k{(j)} \in [x_{k,i},x_{k+1}]$, where $i$ is the block index containing the index $j$, such that: 
	\begin{equation*}
		\nabla_{j} f (x_{k+1}) -	\nabla_{j} f(x_{k,i}) \!=\! \nabla_{j}^2 f (z_k{(j)}) (x_{k+1} \!- x_{k,i})   \; \forall  j\!=\!1\!:\!d.
	\end{equation*} 
Since the sequence $(x_k)_{k \geq 0}$ is assumed bounded, there exists a finite positive constant $C_f$ such that $\| \nabla_{j}^2 f (z_k{(j)}) \| \leq C_f$ for all $k \geq 0$ and $j=1:d$.  Hence, from Cauchy-Schwarz inequality we further get: 
\begin{align*}
  & | \nabla_{j} f (x_{k+1}) -	\nabla_{j} f(x_{k,i})  |  \leq C_f \| x_{k+1} \!- x_{k,i} \| \\
  & \leq C_f \|  x_{k+1} \!- x_{k} \|  \quad \forall k \geq 0, j=1:d.  
\end{align*} 
 Finally, we get:
\begin{align*}
& \min_{s_{k+1} \in \partial F(x_{k+1})} \|s_{k+1}\| \\
& \leq \left \| \left[\nabla_{(i)} f(x_{k+1}) -\! \nabla_{(i)} f(x_{k,i}) \right]_{i=1}^N \right \|  \!+\!  M_{\max} \! \left \| x_{k+1} \!- x_k \right \|\\
& \leq (C_f \sqrt{n} + M_{\max}) \left \| x_{k+1} \!- x_k \right \|, 
\end{align*} 
which proves our statement for $C = C_f \sqrt{n} + M_{\max}$.  
\end{proof}

\noindent Note that  RCCM algorithm  does not require the knowledge of  the coordinatewise Lipschitz constants of the differentiable function  $f$, as for example proximal alternating linearized minimization algorithm in  \cite{Teb:14}. Additionally, the constant $C$ from the previous lemma  only appears in the convergence rates, our algorithm does not require explicit knowledge of it. In some applications  the first term, $f$, although differentiable might have expensive  gradient evaluation or  the corresponding  coordinatewise Lipschitz constants  might be difficult to estimate; on the other hand,  if the computation of the proximal step  from subproblem \eqref{eq_subpr} is easy, then  the RCCM algorithm  can be used efficiently (see the next section for such proximal friendly functions). Now, we are ready to derive the main convergence rate for the RCCM algorithm, which matches the usual rates for coordinate descent methods from the literature, see   \cite{NecCho:24, Teb:14}.

\begin{thm}
Let Assumption 1 hold and assume that the iterates of the RCCM algorithm, with  the regularization parameters $M_i> 0 \;  \forall i=1:N$,  are bounded. Then, the following convergence rate in the first-order stationarity  holds:  
\begin{align}
\label{conv1}
\min_{k=0: K}  \min_{s_{k+1} \in \partial F(x_{k+1})} \|s_{k+1}\|^2 \leq \frac{2 C^2 \left( F(x_0) - F^* \right) }{M_{\min} (K+1)}.     
\end{align}
\end{thm}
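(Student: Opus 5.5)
The plan is to combine the two ingredients already established: the sufficient decrease of Lemma \ref{lemma:1} and the relative error bound of Lemma \ref{lemma:3}. This is the standard telescoping argument for descent methods in the nonconvex setting.

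\textbf{Step 1: lower-bound the step by the subgradient.} First, square the bound of Lemma \ref{lemma:3}. Since the iterates are bounded, the constant $C = C_f\sqrt{n} + M_{\max}$ is uniform in $k$. Hence, for every $k \geq 0$,
\begin{equation*}
\min_{s_{k+1} \in \partial F(x_{k+1})} \|s_{k+1}\|^2 \leq C^2 \|x_{k+1} - x_k\|^2 .
\end{equation*}
The minimum over $\partial F(x_{k+1})$ is attained: the limiting subdifferential is closed, and it is nonempty because Lemma \ref{lemma:2} exhibits an element of it.

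\textbf{Step 2: telescope the descent.} By Lemma \ref{lemma:1}, $\|x_{k+1}-x_k\|^2 \leq \frac{2}{M_{\min}}\left(F(x_k)-F(x_{k+1})\right)$. Summing from $k=0$ to $K$ gives
\begin{equation*}
\sum_{k=0}^{K} \|x_{k+1}-x_k\|^2 \leq \frac{2}{M_{\min}}\left(F(x_0) - F(x_{K+1})\right) \leq \frac{2}{M_{\min}}\left(F(x_0)-F^*\right),
\end{equation*}
where the last inequality uses $F(x_{K+1}) \geq F^*$, which holds by Assumption 1(ii).

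\textbf{Step 3: conclude.} Bound the minimum over $k=0{:}K$ of the squared subgradient norms by their average over the $K+1$ indices. Applying Step 1 termwise and then Step 2 yields
\begin{equation*}
\min_{k=0:K}\min_{s_{k+1}\in\partial F(x_{k+1})}\|s_{k+1}\|^2 \leq \frac{C^2}{K+1}\sum_{k=0}^K \|x_{k+1}-x_k\|^2 \leq \frac{2C^2\left(F(x_0)-F^*\right)}{M_{\min}(K+1)},
\end{equation*}
which is exactly \eqref{conv1}.

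\textbf{Main obstacle.} Once the lemmas are granted there is no real difficulty. The only point needing care is that $C$ must be independent of $k$. This is what the boundedness hypothesis provides: the segments $[x_{k,i},x_{k+1}]$ all lie in the convex hull of the bounded sequence (every $x_{k,i}$ mixes blocks of $x_k$ and $x_{k+1}$). On that compact set the continuous Hessian is uniformly bounded.
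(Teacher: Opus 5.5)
Your proof is correct and is essentially the paper's argument. The paper chains Lemma~1 and Lemma~3 into the per-iteration decrease $F(x_{k+1}) \leq F(x_k) - \frac{M_{\min}}{2C^2}\min_{s_{k+1}\in\partial F(x_{k+1})}\|s_{k+1}\|^2$ and then telescopes, whereas you telescope first and apply Lemma~3 termwise, which is the same computation in a different order. One small slip in your closing remark: a block-mix $x_{k,i}$ of $x_k$ and $x_{k+1}$ need not be a convex combination of them (e.g., $x_k=(0,1)$, $x_{k+1}=(1,0)$ gives $x_{k,1}=(1,1)$), so it need not lie in the convex hull of the iterates. It does, however, satisfy $\|x_{k,i}\|^2 \leq \|x_k\|^2 + \|x_{k+1}\|^2$, so all segments $[x_{k,i},x_{k+1}]$ stay in a fixed ball and the uniform Hessian bound still holds.
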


\begin{proof}
Combining Lemmas 1-3, we obtain:
\begin{align}
\label{eq:cr}
  F(x_{k+1}) & \leq F(x_{k}) - \dfrac{M_{\min}}{2} \|x_{k+1} - x_{k}\|^2 \nonumber \\
&  \leq F(x_{k}) - \dfrac{M_{\min}}{2 C^2}  \min_{s_{k+1} \in \partial F(x_{k+1})} \|s_{k+1}\|^2.
\end{align}
Adding this relation from $k=0$ to $k=K$, and using Assumption 1 (ii) we obtain the convergence rate \eqref{conv1}. 
\end{proof}

\noindent Furthermore, improved (local) convergence rates can be derived  for our algorithm when $F$ satisfies additionally the Kurdyka-Lojasiewicz  (KL) property \cite{bolte2007lojasiewicz}, i.e., when there exist $\gamma, \epsilon >0$ such that   $F$ satisfies for all $x$ with $\text{dist}(x, \mathcal{X}) \leq \gamma$ and $F_* < F(x) < F_* + \epsilon$ the following inequality  
	\begin{align}
    \label{kl}
		& 	F(x) - F_*  \leq \sigma_q \text{dist}(0, \partial  F(x))^q,  
	\end{align}
where  $\mathcal{X}$ is a given compact set  on which $F$ takes a constant value $F_*$, $\sigma_q>0$ and  $q>1$. The KL property \eqref{kl} holds for a large class of functions including semi-algebraic functions (e.g., real polynomial functions or functions whose graph  is given as a finite union of sets defined by a finite number of polynomial equalities and inequalities);   piecewise linear/quadratic functions such as $\Vert x\Vert _{1}, \Vert x\Vert _{0}, \gamma\sum_{i=1}^{k}|x_{[i]}|$,  where $|x_{[i]}|$ is the $i$-th largest  entry in $x, \;k\leq n$ and $\gamma \in (0, 1]$; the indicator function $\delta_{\Delta}(x)$, where $\Delta= \{x\in\mathbb{R}^n : e^T x = 1, x \geq 0\}$;  least-squares problems with the smoothly clipped absolute deviation,  see  \cite{NecCho:24, Teb:14}. In the next theorem, we state the improved (local) convergence rates under the KL property.
\begin{thm}
\label{thm:kl-decrease}
Let Assumption~1 hold, and assume that the sequence of iterates $({x_k})_{k\geq 0}$ generated by algorithm~RCCM is bounded. Additionally, we assume that $F$ satisfies the KL property \eqref{kl} and  that there exists $k_0\geq 0$ such that, for every $k\geq k_0$, the iterates $x_{k}$ belong to this KL neighborhood. Then, for every $k\geq k_0$, we have:
\begin{align}
\label{eq:kl-decrease-final}
F(x_{k+1}) \!- \!F^*
\!\leq\!
(F(x_k)
-F^*)\! -\!
\frac{M_{\min}}{2C^2}
\left(\!
\frac{F(x_{k+1})-F_*}{\sigma_q}
\!\right)^{2/q}  .
\end{align}
\end{thm}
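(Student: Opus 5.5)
The plan is to start from the descent inequality \eqref{eq:cr} established in the proof of the previous theorem and then lower-bound the subgradient norm term using the KL property \eqref{kl} evaluated at $x_{k+1}$. Concretely, Lemma~\ref{lemma:1} gives $F(x_{k+1}) \leq F(x_k) - \frac{M_{\min}}{2}\|x_{k+1}-x_k\|^2$. Lemma~\ref{lemma:3}, which applies because the iterates are bounded, gives $\text{dist}(0,\partial F(x_{k+1})) \leq C\|x_{k+1}-x_k\|$. Combining the two yields
\begin{align*}
F(x_{k+1}) \leq F(x_k) - \frac{M_{\min}}{2C^2}\,\text{dist}(0,\partial F(x_{k+1}))^2 .
\end{align*}
Subtracting $F^*$ from both sides gives the left-hand structure of \eqref{eq:kl-decrease-final}. (Here I identify $F^*$ with the KL level value $F_*$, or at least use $F_*$ on the right as written. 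I will keep the notation of the statement and note that the subtraction is harmless since the same constant appears on both sides.)

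Next I invoke the KL property at $x_{k+1}$. Since $k+1 > k_0$, the hypothesis places $x_{k+1}$ in the KL neighborhood, so $F(x_{k+1}) - F_* \leq \sigma_q\,\text{dist}(0,\partial F(x_{k+1}))^q$. Inverting gives
\begin{align*}
\text{dist}(0,\partial F(x_{k+1}))^2 \geq \left(\frac{F(x_{k+1})-F_*}{\sigma_q}\right)^{2/q},
\end{align*}
which requires $F(x_{k+1}) - F_* \geq 0$ so that the power is well defined. This holds because inside the neighborhood we have $F_* < F(x)$. Substituting this bound into the descent inequality yields \eqref{eq:kl-decrease-final} directly.

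The main obstacle is the bookkeeping around the KL neighborhood rather than any estimate. Two boundary cases need care. First, if $F(x_{k+1}) = F_*$ for some $k$, the KL inequality need not be assumed there, but the right-hand side term vanishes and the claim reduces to monotonicity from Lemma~\ref{lemma:1}. By the same monotonicity, once $F(x_{k+1}) = F_*$ the value stays at $F_*$ for all later iterates. Second, I must ensure the distance function is attained or at least bounded via the explicit element of Lemma~\ref{lemma:2}; this is already handled inside Lemma~\ref{lemma:3}. I would state these two remarks briefly and otherwise present the two-line chain above.
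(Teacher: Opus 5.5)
Your proof is correct and matches the paper's argument: combine Lemma~1 and Lemma~3 to obtain the descent inequality in terms of $\operatorname{dist}(0,\partial F(x_{k+1}))^2$, then invert the KL inequality at $x_{k+1}$ and substitute. Your boundary-case discussion is unnecessary, since the strict condition $F_* < F(x)$ built into the KL neighborhood already rules out $F(x_{k+1}) = F_*$. Your side claim that the value would then stay at $F_*$ would also need $F_* = F^*$, but nothing in the proof depends on it.
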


\begin{proof}
From the sufficient-decrease estimate obtained by combining
Lemma~1 and Lemma~3, we have:
\begin{align}
\label{eq:descent-kl-proof}
F(x_{k+1})
\leq
F(x_k)-
\frac{M_{\min}}{2C^2}
\min_{s_{k+1}\in\partial F(x_{k+1})}
|s_{k+1}|^2.
\end{align}
Since we have
$\operatorname{dist}(0,\partial F(x_{k+1}))
:=
\min_{s_{k+1}\in\partial F(x_{k+1})}
|s_{k+1}|$,
inequality \eqref{eq:descent-kl-proof} becomes
\begin{align}
F(x_{k+1})
\leq
F(x_k)
-
\frac{M_{\min}}{2C^2}
\operatorname{dist}(0,\partial F(x_{k+1}))^2.
\label{eq:descent-distance}
\end{align}
Further, for $k\geq k_0$, the iterate $x_{k}$ lies in the neighborhood
where the KL inequality \eqref{kl} holds, i.e.: 
\begin{align*}
F(x_{k+1})-F_*
\leq
\sigma_q
\left(\operatorname{dist}(0,\partial F(x_{k+1}))^2\right)^{\frac{q}{2}}.
\end{align*}
Substituting the above relation into \eqref{eq:descent-distance}, we obtain
\begin{align*}
F(x_{k+1})
\leq
F(x_k)
-
\frac{M_{\min}}{2C^2}
\left(
\frac{F(x_{k+1})-F_*}{\sigma_q}
\right)^{2/q},
\end{align*}
which proves \eqref{eq:kl-decrease-final}.
\end{proof}

\noindent It is worth noting that the convergence rate depends on the KL parameter $q >1$. More precisely, let us denote:
\[
\Delta_k:=F(x_k)-F_*.
\]
Then, for all sufficiently large $k$, equation \eqref{eq:kl-decrease-final} becomes:
\begin{align}
\Delta_{k+1}+\mu \Delta_{k+1}^{2/q}\leq \Delta_k,
\qquad
\mu:=\frac{M_{\min}}{2C^2\sigma_q^{2/q}}>0.
\label{eq:rec}
\end{align}
When $q\in(1, 2)$, we have $\frac{2}{q}>1$ and the  recursion \eqref{eq:rec} yields a sublinear convergence rate in the objective residual of the form
\[
\Delta_k=\mathcal{O}\left(k^{-\frac{q}{2-q}}\right) \quad \forall k \geq k_0.
\]
Note that the sublinear rate in objective residual improves as the KL parameter $q$ approaches $2$ from below. When $q=2$, the recursion \eqref{eq:rec} becomes
\[
(1+\mu)\Delta_{k+1}\leq \Delta_k,
\]
and therefore the objective values converge linearly:
\[
\Delta_k\leq
\left(\frac{1}{1+\mu}\right)^{k-k_0}\Delta_{k_0} \quad \forall k \geq k_0.
\]
When $q>2$, we have $\frac{2}{q}<1$. In this case, the recursion \eqref{eq:rec} implies a locally superlinear rate in the objective residual. Indeed, for sufficiently small $\Delta_k$, we obtain
\[
\Delta_{k+1}
\leq
\left(\frac{\Delta_k}{\mu}\right)^{q/2},
\]
where $q/2>1$. Consequently, once the iterates enter the KL neighborhood, the objective residual  decreases faster than linearly, i.e., superlinearly. Next, we apply our RCCM algorithm  on a specific matrix
factorization application.


\section{Quantized matrix factorization using RCCM algorithm}
\noindent Quantized matrix factorization (QMF) is a technique that combines matrix factorization with quantization (i.e., elements constrained to bounded integer values) to create low-rank, compressed representations of data, often used for tasks like image compression or efficient training of machine learning models. Traditional compression methods  usually represent the compressed  data in continuous domains and, therefore they  implement quantization steps separately  at the end of the compression process. Hence,  quantization errors cannot be incorporated into the iterative compression process.  Recent work addresses this issue, by introducing  quantization constraints into the matrix factorization problem, see e.g. \cite{Fre:14, qmf}. 

\medskip

\noindent Let us define the QMF problem. Given a data matrix $Y \in \mathbb{R}^{m \times n}$ (e.g., an image),  one approach to tackle the data compression task involves using a low-rank approximation:
\begin{align*}
    Y \approx UV^T = \sum_{j=1}^{d} u^{(j)}\left(v^{(j)}\right)^T 
\end{align*}
where $U \in \mathbb{R}^{m \times d}$, $V \in \mathbb{R}^{n \times d}$, with $d\leq \min(n,m)$ is the rank of the approximation,   and  $u^{(j)}$ and  $v^{(j)}$  denote the  $j^{\text{th}}$ columns of $U$ and $V$, respectively.  The optimization framework assumes minimizing the approximation error quantified via Frobenius norm: 
$$ \min_{U,V} \frac{1}{2}\| Y - UV^T\|^2_F.$$ This is  known as the matrix factorization problem and a globally optimal rank-\(d\) solution can be obtained from the truncated singular value decomposition  (SVD) \cite{eckart1936approximation}. To enhance  transmission, computations and storage efficiency in data compression, a quantization procedure is integrated into the matrix factorization problem: 
\begin{align}
	\label{eq:qmf-l}
	&\!\!\!\!  \min_{ \substack{ U \in \mathbb{Z}_{[\alpha,\beta]}^{m \times d}, V \in \mathbb{Z}_{[\alpha,\beta]}^{n \times d} \\  \alpha, \beta \in \mathbb{Z}}} \frac{1}{2} \| Y - UV^T\|_{F}^{2}  +  \lambda  \log_2(\beta -\alpha +1),
\end{align}
where  the first term  is the distortion, the second is  the rate and  $\lambda \geq 0$ is a Lagrange multiplier that adjusts the weight between output quality and its size. If  $\lambda = 0$, then   $\alpha<\beta$ are fixed integer numbers;  otherwise they are  chosen adaptively.  Formulation 	\eqref{eq:qmf-l} with $\lambda=0$ has been also considered in  \cite{Fre:14, qmf}.  For simplicity of the exposition we also consider  $\lambda=0$ in the sequel, i.e.,  problem  \eqref{eq:qmf-l} with $\alpha<\beta$ fixed and refer to it as the quantized matrix factorization (QMF) problem:
\begin{align}
\label{eq:qmf}
& \min_{ U \in \mathbb{Z}_{[\alpha,\beta]}^{m \times d}, V \in \mathbb{Z}_{[\alpha,\beta]}^{n \times d}} \frac{1}{2} \| Y - UV^T\|_{F}^{2} \\
& =\!\!\!\min_{ U \in \mathbb{R}^{m \times d}, V \in \mathbb{R}^{n \times d}} \! \frac{1}{2} \| Y - UV^T\|_{F}^{2}  + 1_{ \mathbb{Z}_{[\alpha,\beta]}^{m \times d}} (U) + 1_{\mathbb{Z}_{[\alpha,\beta]}^{n \times d}}(V),  \nonumber 
\end{align}
Constraining $U$ and $V$ to have integer entries in a specified interval $[\alpha,\beta]$, the quantization error is incorporated in the compression process, not at the end of the process. Clearly,  the QMF problem \eqref{eq:qmf}  is a particular case of the nonconvex composite optimization problem  \eqref{eq:prob} with
 $x=(U,V)$, twice differentiable nonconvex function  
\begin{align}
\label{eq:f}
f(x) =  \frac{1}{2} \| Y - UV^T\|_{F}^{2}
\end{align} 
and the separable  nonconvex function  
\begin{align}
\label{eq:g}
g(x) =  1_{ \mathbb{Z}_{[\alpha,\beta]}^{m \times d}} (U) + 1_{\mathbb{Z}_{[\alpha,\beta]}^{n \times d}}(V).
\end{align} 
Since the indicator function $g$ has $\text{dom} g$  in a compact set, it follows that all the conditions from Assumption 1 hold for QMF problem \eqref{eq:qmf}.  Notably, for the general problem, boundedness of the iterates is assumed; for QMF, boundedness follows automatically from the integer box constraints. In the QMF problem we fix the block variables $x^{(i)}$ either the columns  $u^{(j)}$ or $v^{(j)}$ of the matrices $U$ and $V$, respectively.   Hence, the previous RCCM algorithm  can be used to solve the QMF problem \eqref{eq:qmf}. 
After the initialization phase, RCCM algorithm requires that, at iteration $k$, we fix $V_k$ and we update the columns of  $U$, then proceed analogously by fixing $U_{k+1}$ and updating the columns of $V$. Moreover, we perform the matrix factor updates columnwise, e.g., examining one column $i_k=1:d$ of $U$
at a time:
\begin{align}
\label{eq:sub_prob_u}
u_{k+1}^{(i_k)}\! = \!\argmin_{u^{(i_k)} \in \mathbb{R}^{m}} \! &\frac{1}{2}\|A^{(i_k)} - \! u^{(i_k)} (v_k^{(i_k)})^T\|^2_F  + 1_{\mathbb{Z}_{[\alpha,\beta]}^{m}} (u^{(i_k)}) \nonumber\\
&+ \frac{M_u}{2} \| u^{(i_k)} -u_k^{(i_k)} \|^2,
\end{align}
where the regularization parameter $M_u \geq 0$,   $A^{(i_k)} = Y - \sum_{j \neq i_k, j=1}^d u_{k, i_k -1}^{(j)}  (v_k^{(j)})^T $ and  $u_{k, i_k -1}^{(j)}$ is the $j^{\text{th}}$ column of matrix $U_{k, i_k-1}$,  which comprises a mixture of both the current and the previously updated columns, i.e $U_{k, i_k-1} := [u_{k+1}^{(1)}, \ldots, u_{k+1}^{(i_k -1)}, u_{k}^{(i_k)}, \ldots , u_{k}^{(d)}]$, with the convention $U_{k, 0} = U_k:= [u_{k}^{(1)},\ldots, u_{k}^{(d)}]$. 
The columns in V are updated solving a similar problem as in \eqref{eq:sub_prob_u} with the regularization parameter $M_v \geq 0$. Hence, in RCCM algorithm we need to solve one of the following generic subproblems:
\begin{align}   
&\min_{u^{(i)} \in \mathbb{Z}_{[\alpha,\beta]}^{m}} \!\! \frac{1}{2} \|A^{(i)} \!- u^{(i)} (\bar v^{(i)}))^T \|_F^2 + \frac{M_u}{2} \| u^{(i)} \!- \bar{u}^{(i)} \|^2 \label{eq:generic_pb_1} \\
&\min_{v^{(i)} \mathbb{Z}_{[\alpha,\beta]}^{n}} \!\!  \frac{1}{2} \| B^{(i)} \!- \bar u^{(i)} (v^{(i)}))^T \|_F^2 + \frac{M_v}{2} \| v^{(i)} \!- \bar{v}^{(i)} \|^2, \label{eq:generic_pb_2}
\end{align}
where the vectors $\bar{u}^{(i)},\bar{v}^{(i)}$ and the matrices $ A^{(i)}, B^{(i)}$ are fixed.  Further, we show how the generic subproblems \eqref{eq:generic_pb_1}  and \eqref{eq:generic_pb_2} can be decomposed into one dimensional  optimization problems that have closed form solutions and consequently step \eqref{eq_subpr} in RCCM algorithm can be applied efficiently for solving the QMF problem.  Indeed, the objective function in \eqref{eq:generic_pb_1} can be rewritten as:
\begin{align*}
     &\frac{1}{2}\|A^{(i)} - u^{(i)} (\bar v^{(i)}))^T \|_F^2 + \frac{M_u}{2} \| u^{(i)} - \bar{u}^{(i)} \|^2 = \\
     & \frac{1}{2} \sum_{p=1}^{m} \sum_{q=1}^{n} \left( A_{p,q}^{(i)} -u^{(i)}_p \bar v^{(i)}_q \right)^2 +\ \ \frac{M_u}{2} \sum_{p=1}^{m}(u^{(i)}_p -\bar{u}^{(i)}_p)^2, 
\end{align*}
where $A_{p,q}^{(i)}$ denotes the $(p, q)$ element  of matrix $A^{(i)}$ and  $u_p^{(i)} $ and $\bar{u}_p^{(i)} $ are the $p$th entries of the vectors $u^{(i)}$ and $\bar{u}_p^{(i)}$, respectively.  Similarly, $v_q^{(i)} $ is the $q$th element of vector $v^{(i)}$.  Since all variables are fixed with the exception of $u_p^{(i)}$, the subproblem \eqref{eq:generic_pb_1} can be divided into $m$ one-dimensional  quadratic subproblems of the  form:
\begin{align} \label{eq:scalar_pb}
    \min_{u^{(i)}_p \in \mathbb{Z}_{[\alpha,\beta]}} \! \frac{1}{2} \! \sum_{q=1}^{n} \! \left( \! A_{p,q}^{(i)} \! -u^{(i)}_p \bar v^{(i)}_q \!\right)^2 \!+\! \frac{M_u}{2} (u^{(i)}_p \! -\bar{u}^{(i)}_p)^2,\,
\end{align}
for $p\!=\!1\!:\!m$, whose solution can  be  computed explicitly~as:
\begin{align}
\label{eq:explicit}
& (u^{(i)}_p)^+ =\left[   \frac{M_u \bar{u}^{(i)}_p + \sum_{q=1}^{n}A_{p,q}^{(i)} \bar v^{(i)}_q}{ M_u + \sum_{q=1}^{n}(\bar v^{(i)}_q)^2 }  \right]_{Z_{[\alpha, \beta]}}\\
&= \text{clamp} \!\left( \alpha, \beta, \text{round} \left(  \frac{M_u \bar{u}^{(i)}_p + \sum_{q=1}^{n}A_{p,q}^{(i)} \bar v^{(i)}_q}{M_u + \sum_{q=1}^{n}(\bar v^{(i)}_q)^2 }  \right ) \!\right ) .  \nonumber
\end{align} 
Hence, using the definition of the matrix $A^{(i)}$, the update of the entire column $u^{(i)}$ is as follows:
\begin{align*}  (u^{(i)})^{+} & =  \left[   \frac{M_u \bar{u}^{(i)} + A^{(i)} \bar v^{(i)}}{ M_u + \|\bar v^{(i)}\|^2}  \right]_{Z^m_{[\alpha, \beta]}} \\
& =  \left[   \frac{M_u \bar{u}^{(i)} + Y \bar v^{(i)} - \sum_{s \neq i }  (\bar V^T \bar V)_{s,i} \bar{u}^{(s)}  } { M_u + \|\bar v^{(i)}\|^2 }  \right]_{Z^m_{[\alpha, \beta]}}.    
\end{align*}
The same arguments apply equally to subproblem \eqref{eq:generic_pb_2}.  In conclusion, the application of Algorithm 1 (RCCM)  on  QMF problem \eqref{eq:qmf} leads to   Algorithm \ref{alg:RQMF}. The algorithm in \cite{Teb:14} has been used in \cite{qmf} to solve the QMF   problem \eqref{eq:qmf} and called \text{QMF} algorithm. Hence, we call Algorithm 1 (RCCM) specialized for the  QMF problem \eqref{eq:qmf} as \textit{regularized quantized matrix factorization}  (RQMF) algorithm.  Since $f$ in \eqref{eq:f} is quadratic in $U$ and $V$, respectively,  the main difference between our Algorithm \ref{alg:RQMF} (RQMF) and the proximal alternating  linearized minimization algorithm in \cite{Teb:14} (which is a projected coordinatewise gradient) when applied to the particular QMF problem \eqref{eq:qmf} consists of the regularization terms for $U$ and $V$. Hence, for $M_u=M_v =0$ our algorithm recovers the one in  \cite{Teb:14} for solving the QMF problem. However, the regularization parameters $(M_u,M_v)$ provide  more flexibility to our Algorithm \ref{alg:RQMF}, guaranteeing \textit{strict} decrease of the objective function along iterations (see Lemma 1) while  the algorithm in \cite{Teb:14} ensures only nonincresing monotonicity (see Theorem 1 in \cite{qmf}), and leads to improved numerical results (see next~section). 

\begin{algorithm}
\caption{(RQMF)} 
\label{alg:RQMF}
\begin{algorithmic} 
          \STATE Input: $Y \in \mathbb{R}^{m \times n}$, $M_u,\, M_v \in \mathbb{R}_+$, rank $d$, bounds $[\alpha, \beta]$ and maximum number  of  iterations $K$.
			 \STATE Set $k=0$. Compute ${\bar{U}}_{0} \in  \mathbb{R}^{m \times d}, \bar{V}_{0} \in \mathbb{R}^{n \times d}$ using truncated SVD, then project
             \begin{align*}
                 U_0 = [\bar{U}_0]_{\mathbb{Z}^{m\times d}_{[\alpha, \beta]}}, \text{ and }   V_0 = [\bar{V}_0]_{\mathbb{Z}^{n\times d}_{[\alpha, \beta]}}
             \end{align*}
			\WHILE{$k \leq  K$}
            \STATE Compute: $A = [a^{(1)} \cdots a^{(d)}]= Y V_k$, \;  $C = V_k^TV_k$
			\FOR{$i_{k} = 1:d$}
            \vspace{-0.6cm}
			     \STATE\begin{align*}
			   \text{1. } &\hat{a}^{(i_k)} =   a^{(i_k)} - \left( \sum_{s=1}^{i_k-1} C_{s, i_k} u_{k+1}^{(s)} + \sum_{s=i_k+1}^{d}  C_{s,i_k} u_{k}^{(s)} \right)\\
                  \text{2. } &u^{(i_k)}_{k+1} = \left[  \frac{M_u u_{k}^{(i_k)} + \hat a^{(i_k)}}{M_u + \| v_{k}^{(i_k)}\|^2}  \right]_{\mathbb{Z}^{m}_{[\alpha, \beta]}}
			\end{align*}
            \ENDFOR  
            \STATE Compute: $B \!=\! [b^{(1)} \cdots b^{(d)}] \!=\! Y^T U_{k+1}, \;  C = U_{k+1}^TU_{k+1}$
			\FOR{$i_{k} = 1:d$}
            \vspace{-0.6cm}
			\STATE \begin{align*}
			    \text{1. } &\hat{b}^{(i_k)} =  b^{(i_k)} - \left( \sum_{s=1}^{i_k-1} C_{s, i_k} v_{k+1}^{(s)} + \sum_{s=i_k+1}^{d}  C_{s,i_k} v_{k}^{(s)} \right)\\
                  \text{2. } &v^{(i_k)}_{k+1} = \left[  \frac{M_v v_{k}^{(i_k)} + \hat b^{(i_k)}}{M_v + \| u_{k}^{(i_k)}\|^2}  \right]_{\mathbb{Z}^{n}_{[\alpha, \beta]}}
			\end{align*}
                  \ENDFOR  
			 \STATE $k \leftarrow k+1$
			\ENDWHILE
		\end{algorithmic} 
	\end{algorithm}

\noindent From \eqref{eq:explicit} it is clear that Algorithm \ref{alg:RQMF} (RQMF) has very simple  iterations (involving only matrix-vector operations).

\begin{remark}
(i) One can notice that the fourth-order polynomial function $f$ defined in \eqref{eq:f} is twice differentiable and $\text{dom} g$ of the separable function $g$ defined in \eqref{eq:g} is compact. Therefore,  all the conditions from Assumption 1 hold for QMF problem \eqref{eq:qmf}. Moreover,  the iterates generated by (RQMF) $x_k=(U_k, V_k)$ are bounded as they lie in the interval $[\alpha,\beta]$. In conclusion, the sublinear  convergence result from Theorem 1 is valid for this particular Algorithm \ref{alg:RQMF} (RQMF) on solving the QMF problem \eqref{eq:qmf}.

\vspace{0.15cm}

\noindent (ii) Additionally, function $f$ defined in \eqref{eq:f} is a fourth-order polynomial, hence semi-algebraic;  the interval $[\alpha,\beta]$ intersected with the set of integers is semi-algebraic, hence  the indicator functions that define  $g$ in \eqref{eq:g} are also semi-algebraic. Therefore, the objective function of the QMF problem \eqref{eq:qmf},   $F=f+g$,  is semi-algebraic, and thus $F$ has the KL property.  In conclusion, the convergence results from Theorem 2 are also valid for this particular Algorithm \ref{alg:RQMF} (RQMF) on solving the QMF problem \eqref{eq:qmf}.  
\end{remark}


\section{Numerical experiments on image compression}
\noindent In this section we present numerical results on image compression using real data. We evaluate our proposed Algorithm \ref{alg:RQMF} (RQMF), against several baselines: Singular Value Decomposition (SVD) based compression \cite{golpayegani2024patchsvd}, standard JPEG \cite{wallace1991jpeg},   and quantization-aware matrix factorization (QMF) algorithm \cite{qmf}. For RQMF and QMF algorithms we choose $K=10$ full iterations. For simulations we use three datasets: KODAK (24 lossless images with a resolution of $768\times 512$) \cite{kodak1993}, CLIC 2024 (32 high-resolution, high-quality images) \cite{clic2024} and ImageNet validation dataset (50.000 images with various resolutions, all higher than $256 \times 256$) \cite{deng2009imagenet}. Then,  the compressed ImageNet dataset is also used  in a classification task using 3 well-known convolutional networks AlexNet \cite{alexnet}, ResNet50 \cite{resnet} and MobileNetV2 \cite{mobilenetv2} based on floating point and integer arithmetic representations.  All experimental results were implemented in Python on  a laptop with Intel Core i9 up to 5.4 GHz (14 physical cores and 20 threads) and  32 GB RAM memory.   The code can be found on Github:  \href{https://github.com/tibi-coder/rqmf-image-compression}{rqmf-image-compression}.

\subsection{Preprocessing of color images}
\noindent We follow the same preprocessing workflow as in \cite{qmf}, see also \cite{golpayegani2024patchsvd}. More specifically, for color image compression the recommended color transformation is the $(Y C_B C_R)$ color space instead of the RGB model, as the human visual system is more sensitive to luminance variations than to chrominance differences \cite{wallace1991jpeg, netravali1995digital}. Hence, in our experiments we decompose our initial RGB image $X \in \mathbb{Z}_{[0, 255]}^{m \times n \times 3} $ in 3 matrices: (i) $Y \in \mathbb{R}_{[0,\, 255]}^{m \times n}$ called \textit{luma} which encapsulates the brightness information; (ii) $Y_{C_B},\, Y_{C_R} \in \mathbb{R}_{[0,\, 255]}^{m \times n}$ correspond to the chrominance components encoding the blue-difference and red-difference signals, respectively. Note that in this representation, the values are continuous but restricted to the interval $[0, 255]$. Moreover, we downsample each chrominance channel using an average-pooling kernel of size $(2, 2)$, which results in  $Y_{C_B},\, Y_{C_R} \in \mathbb{R}_{[0,\, 255]}^{\frac{m}{2} \times \frac{n}{2}}$. Due to the nature of our matrix decomposition–based compression method, it is beneficial to use  patching in the preprocessing step \cite{qmf, golpayegani2024patchsvd}. Hence, each channel is decomposed into non-overlapping $8\times 8$ patches.  These patches are then flattened and stacked vertically,  resulting in the new matrices $Y \in \mathbb{R}_{[0,\, 255]}^{\frac{mn}{64} \times 64 }$, $ Y_{C_B},\, Y_{C_R} \in \mathbb{R}_{[0,\, 255]}^{\frac{mn}{256} \times 64}$. When the compression interval $[\alpha, \beta]$ is centered in zero, slightly improved compression results are obtained when these matrices are also centered around zero, e.g., $Y \leftarrow Y - 128$.


\subsection{Experimental setup for image quantized compression}
\noindent For all methods, the integer bounds for the factor matrices were set to $[\alpha, \beta] = [-16, 15]$ and the maximum number of iterations for RQMF and QMF algorithms was set to $K = 10$. We initialize algorithms RQMF and QMF with $U_0, V_0$ from truncated SVD.   The factorization rank, $d$, was controlled by the quality factor $Q \in [0, 1]$, where $d = \max(\text{round}(Q \times \min(m, n)), 1)$. \\

\noindent \textit{Metrics:}  we evaluate the methods by using standard rate-distortion metrics such as Peak Signal-to-Noise Ratio (PSNR), Structural Similarity Index Measure (SSIM) and Bits Per Pixel (bpp). The first two metrics assess quality of  compressed images, while the last metric measures memory usage.\\

\noindent  \textit{Parameters choice for RQMF}: Note that Algorithm \ref{alg:RQMF} (RQMF) uses 2 regularization parameters $M_u$ and $M_v$. Our objective is to minimize the final bit rate (bpp) while maintaining a high PSNR and SSIM. This requires a content-aware trade-off: (i) \textit{high complexity images} (i.e., high contrast, high entropy) demand weak regularization ($M_u, M_v$ small) to preserve fine details and energy; (ii) \textit{low complexity images} (i.e., uniform areas) can tolerate strong regularization ($M_u, M_v$ large), which significantly lowers the entropy of the resulting integer factors, thereby maximizing lossless compression efficiency and minimizing bpp.
Hence, we set the parameters $M_u, M_v$ depending on the entropy of the image:
\begin{align}
\label{eq:M}
M_u = M_v = \gamma \cdot \left( 1 - \frac{H(Y)}{H_{\max}} \right),
\end{align}
where $\gamma$ is a scaling factor, $H(Y)$ is the Shannon entropy of image $Y$ and $H_{\max}$ is constant and represents the maximum entropy. Given the reduced sensitivity of the human visual system to color errors compared to luminance errors, we employ a differentiated strategy (empirically designed content-adaptive heuristic) for regularization of the luminance and chroma channels: \textit{Luminance ($Y$)} - the $\gamma$  is set to a restrictive value ($\gamma = 60$)  to strictly preserve structural details; \textit{Chrominance ($C_R, C_B$)} - a strong, content-adaptive regularization $\gamma$ is applied to maximize the compression efficiency (minimize bpp) in these channels ($\gamma = 80$). 
The term $\left( 1 - H_c/H_{\max} \right)$ quantifies \textit{compresibility}: a high value indicates low information entropy ($H_c$) and high potential for lossless compression. 

\subsection{Numerical results for image quantized compression}

\noindent The numerical results for image quantized compression are reported in Figures \ref{fig2} and  \ref{fig3}. 
\begin{figure}[h]
    \centering
    \includegraphics[width=1\linewidth]{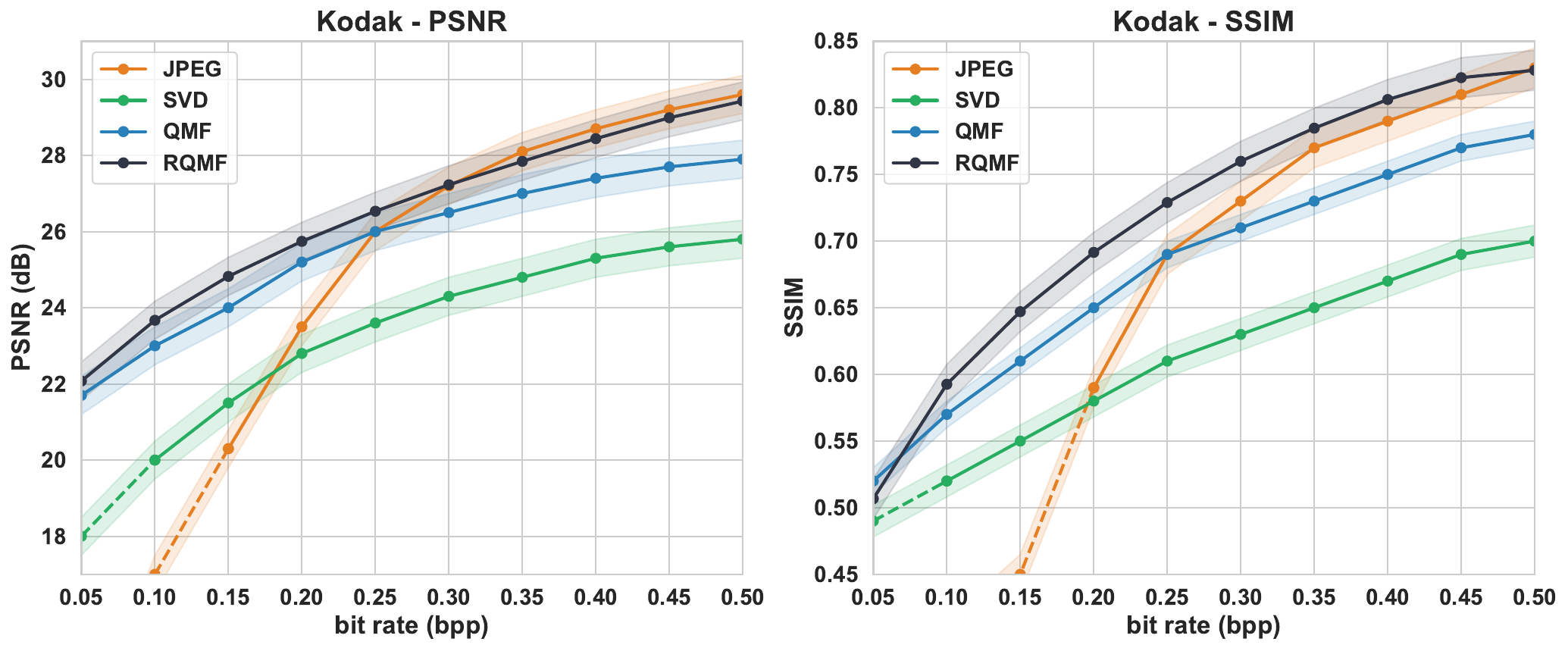}\\
    \includegraphics[width=1\linewidth]{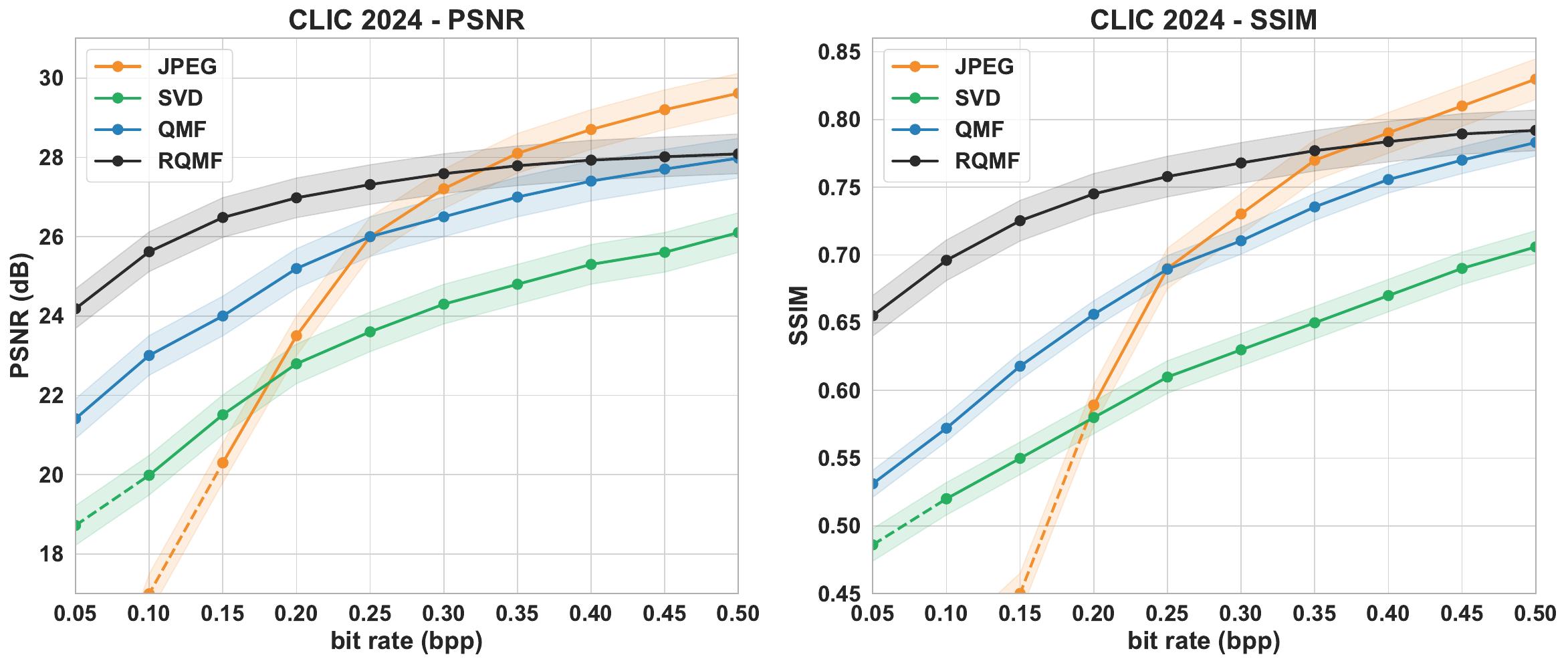}
    \caption{Rate-distortion performance on the Kodak (top) and CLIC 2024 (bottom) datasets. The average PSNR (left) and average SSIM (right) for each method are plotted as functions of bit rate. Shaded areas represent standard errors. Dashed lines indicate extrapolated values predicted using LOESS \cite{cleveland1988locally} for extremely low bit rates that are otherwise unattainable. }
     \label{fig2}
    \end{figure} 
\noindent The rate-distortion curves in terms of PSNR and SSIM versus bpp are presented in Figure \ref{fig2} for our algorithm  RQMF and the baselines QMF, SVD and JPEG. 
As one can see from Figure \ref{fig2}, RQMF consistently achieves superior performance, significantly surpassing  SVD, JPEG and QMF at low bit rates. More specifically,  on the Kodak images, RQMF achieves consistently stronger PSNR and SSIM performance across all bit rates compared to  SVD, JPEG and QMF. At higher bit rates, its PSNR performance is comparable to that of JPEG, while it provides the best overall visual quality (SSIM), as illustrated in the top-right of Figure 1. For high-resolution images, such as  CLIC 2024 dataset, RQMF notably outperforms SVD, JPEG and QMF at low bit rates below $0.3$ bpp, enabling bit rate savings without introducing excessive visual degradation. Across both datasets, our algorithm RQMF achieves PSNR values above 24 dB and SSIM scores above 0.60 for bpp$>\!0.1$, demonstrating that combination of  coordinate minimization and proper regularization yields an algorithm that enjoys convergence guarantees and it is also  efficient,  robust in preserving visual quality and better in several metrics than an algorithm which does not use regularization as QMF \cite{qmf}. 
\begin{figure*}[!t]
    \centering
    \begin{multicols}{3}
        \centering
        \scriptsize
        \setlength{\tabcolsep}{2pt}

        \begin{tabular}{c c c c c}
            \toprule
            \textbf{Orig.} &
            \textbf{SVD} &
            \textbf{JPEG} &
            \textbf{QMF} &
            \textbf{RQMF} \\
            \midrule

           \shortstack{
            \includegraphics[
                width=0.6\columnwidth,
                valign=m
            ]{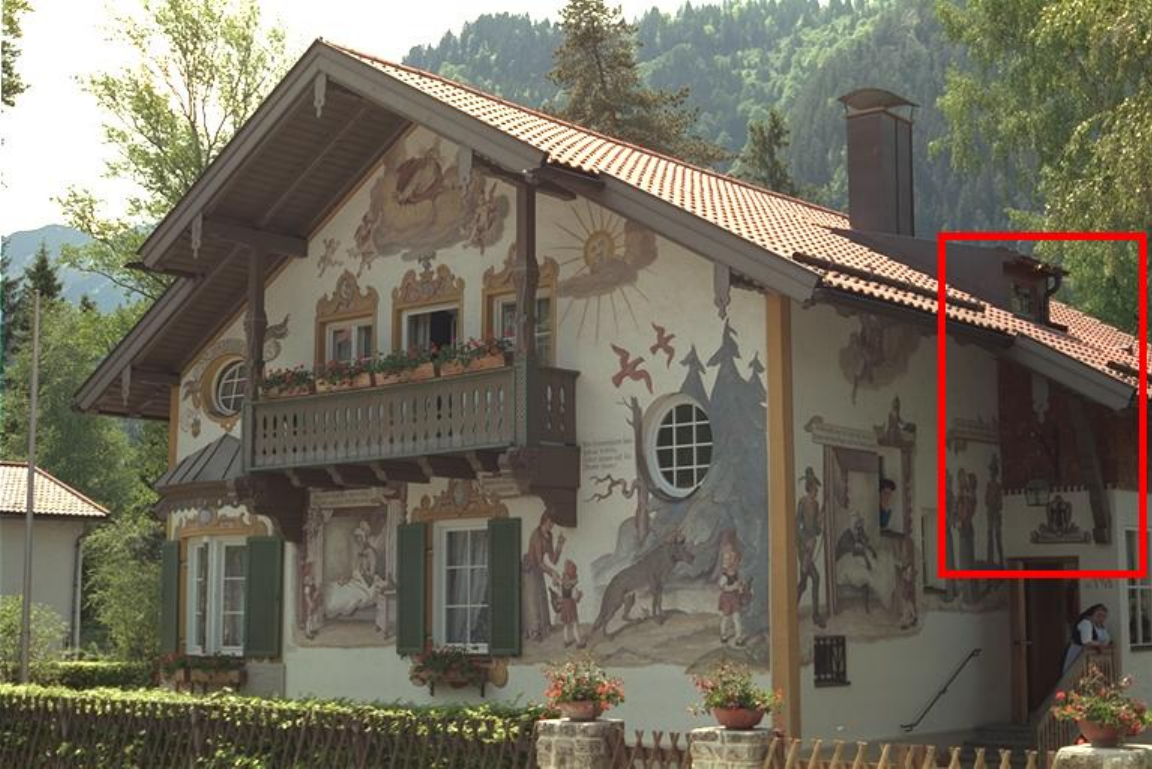} 
            \\
           \textbf{Kodak}
             }
            &
            \shortstack{
                \includegraphics[
                    width=0.6\columnwidth,
                    valign=m
                ]{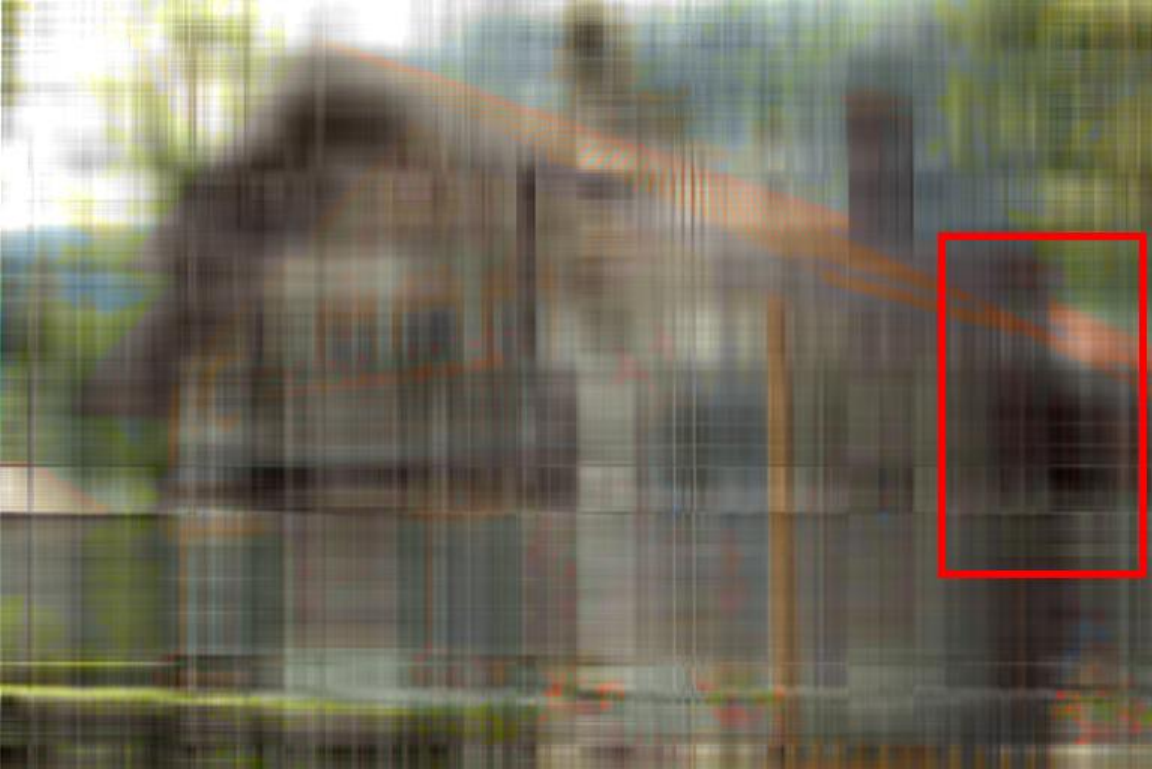}
                \\
                0.26 bpp / 18.82 dB
            }
            &
            \shortstack{
                \includegraphics[
                    width=0.6\columnwidth,
                    valign=m
                ]{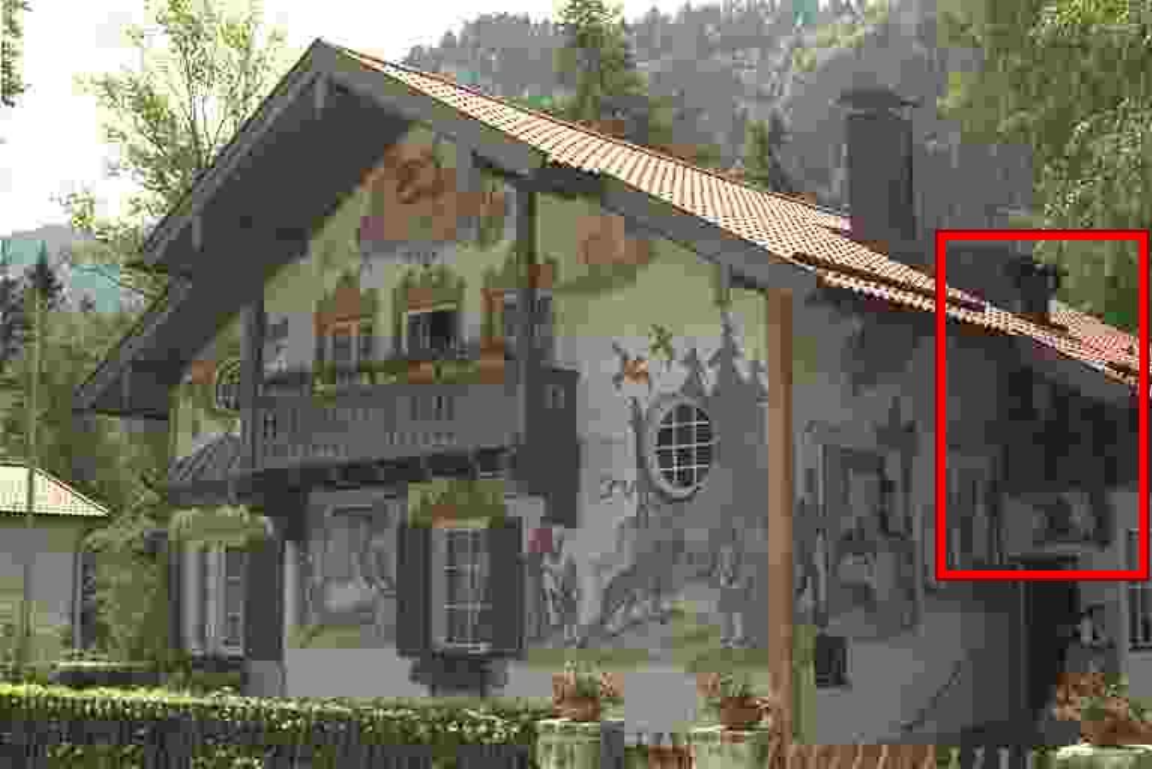}
                \\
                0.27 bpp / 23.16 dB
            }
            &
            \shortstack{
                \includegraphics[
                    width=0.6\columnwidth,
                    valign=m
                ]{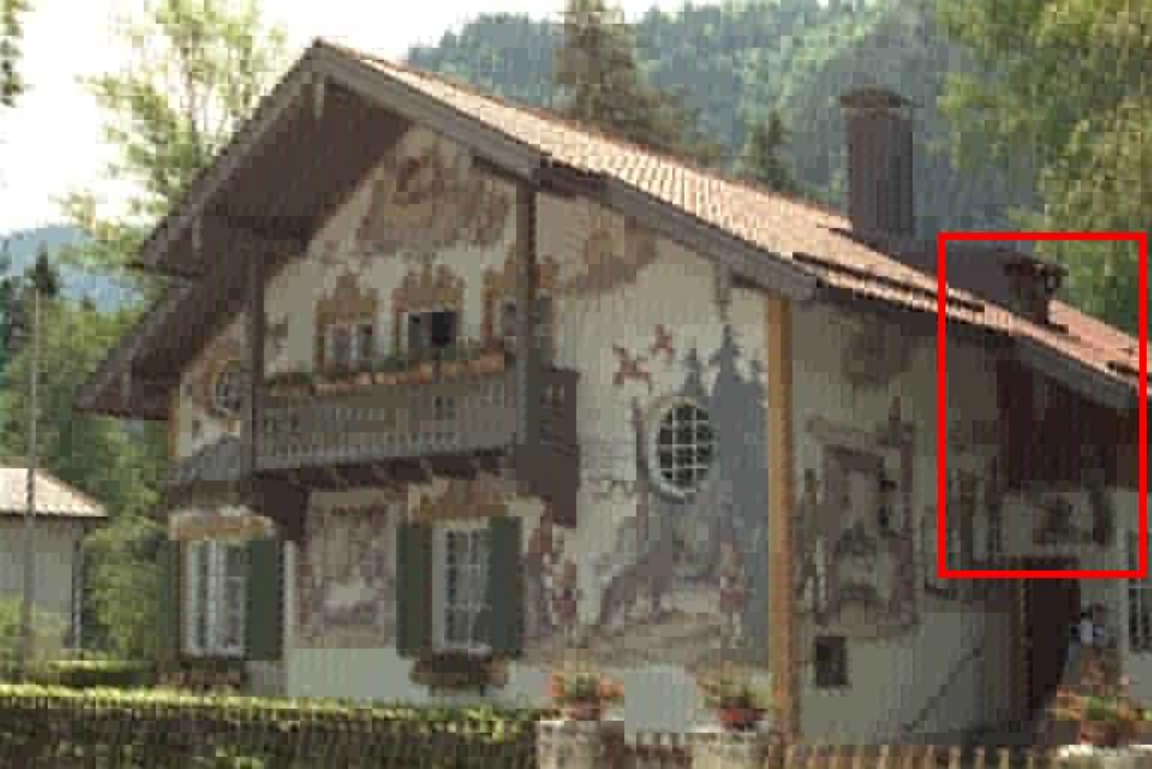}
                \\
                0.26 bpp / 23.04 dB
            }
            &
            \shortstack{
                \includegraphics[
                    width=0.6\columnwidth,
                    valign=m
                ]{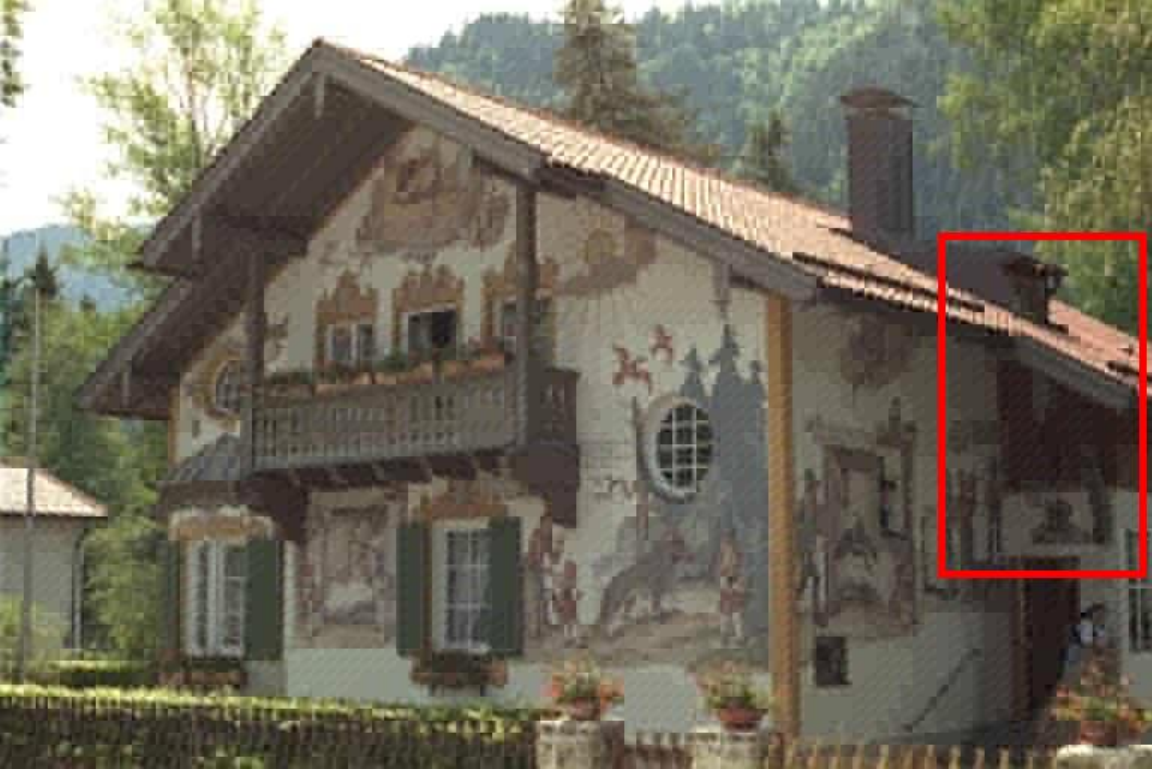}
                \\
                0.25 bpp / 23.40 dB
            }
            \\
            \midrule

            \shortstack{
            \includegraphics[
                width=0.6\columnwidth,
                valign=m
            ]{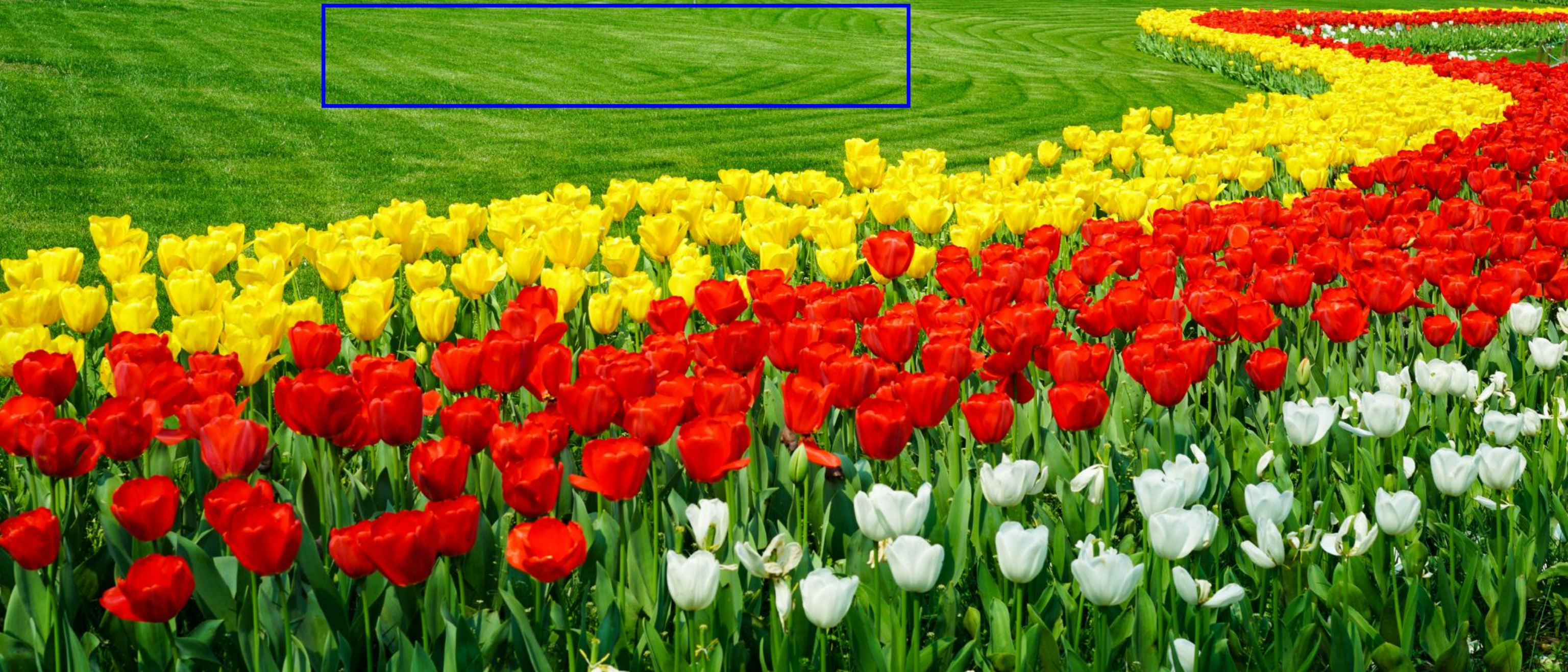}
              \\
              \textbf{CLIC 2024}
             }
            &
            \shortstack{
                \includegraphics[
                    width=0.6\columnwidth,
                    valign=m
                ]{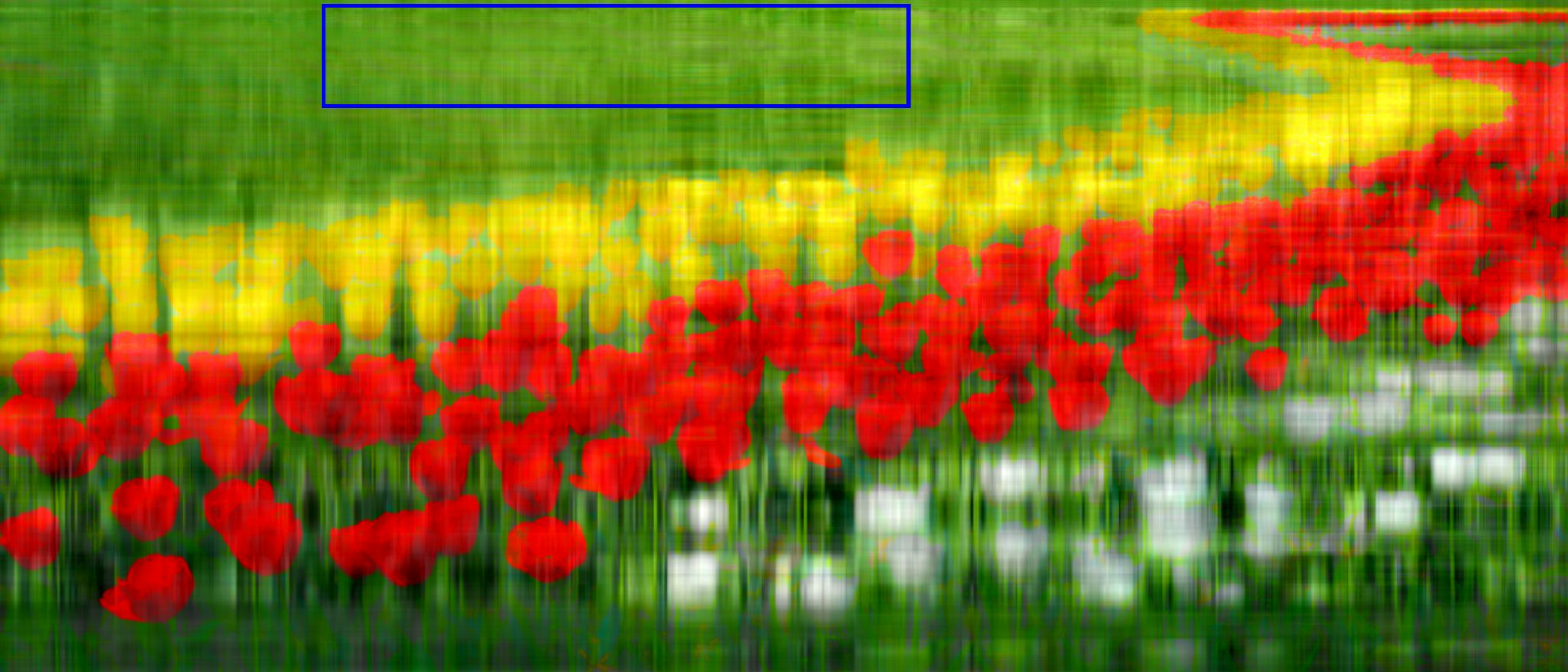}
                \\
                0.18 bpp / 18.01 dB
            }
            &
            \shortstack{
                \includegraphics[
                    width=0.6\columnwidth,
                    valign=m
                ]{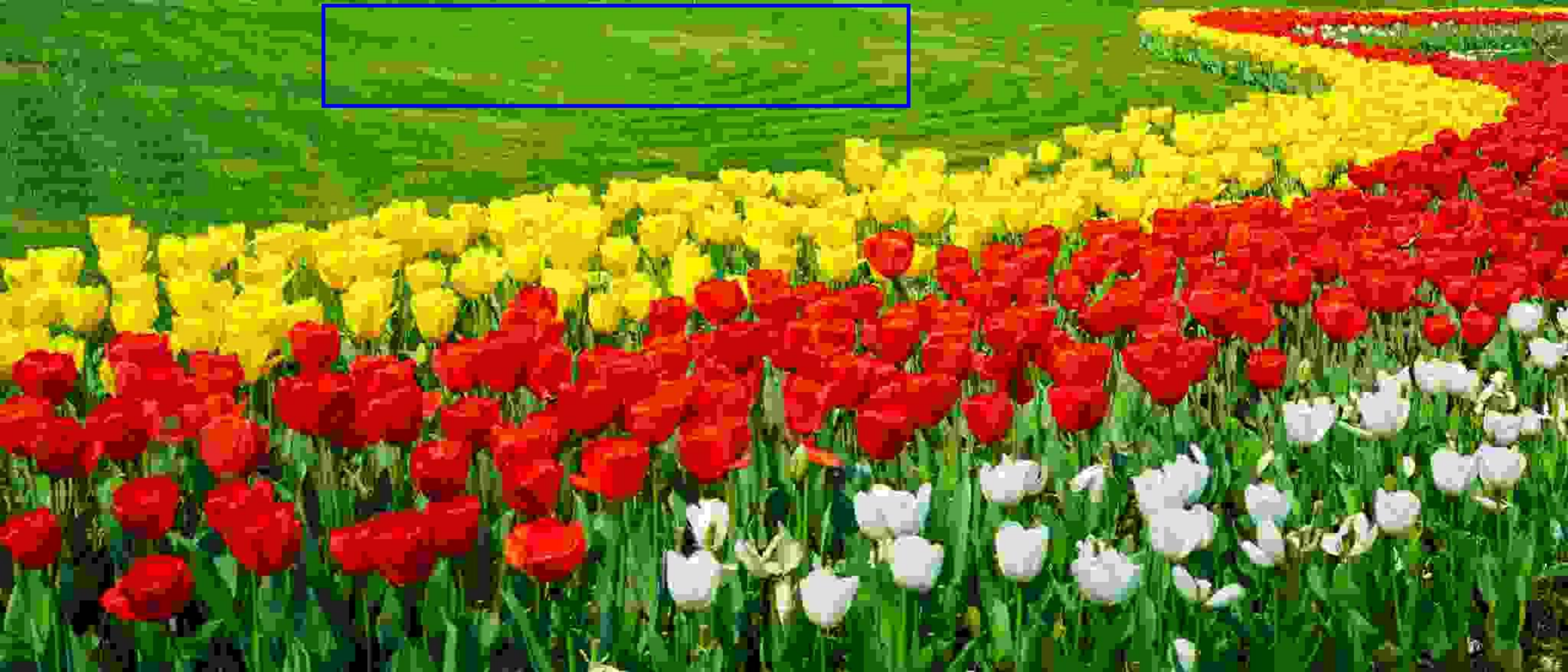}
                \\
                0.18 bpp / 20.40 dB
            }
            &
            \shortstack{
                \includegraphics[
                    width=0.6\columnwidth,
                    valign=m
                ]{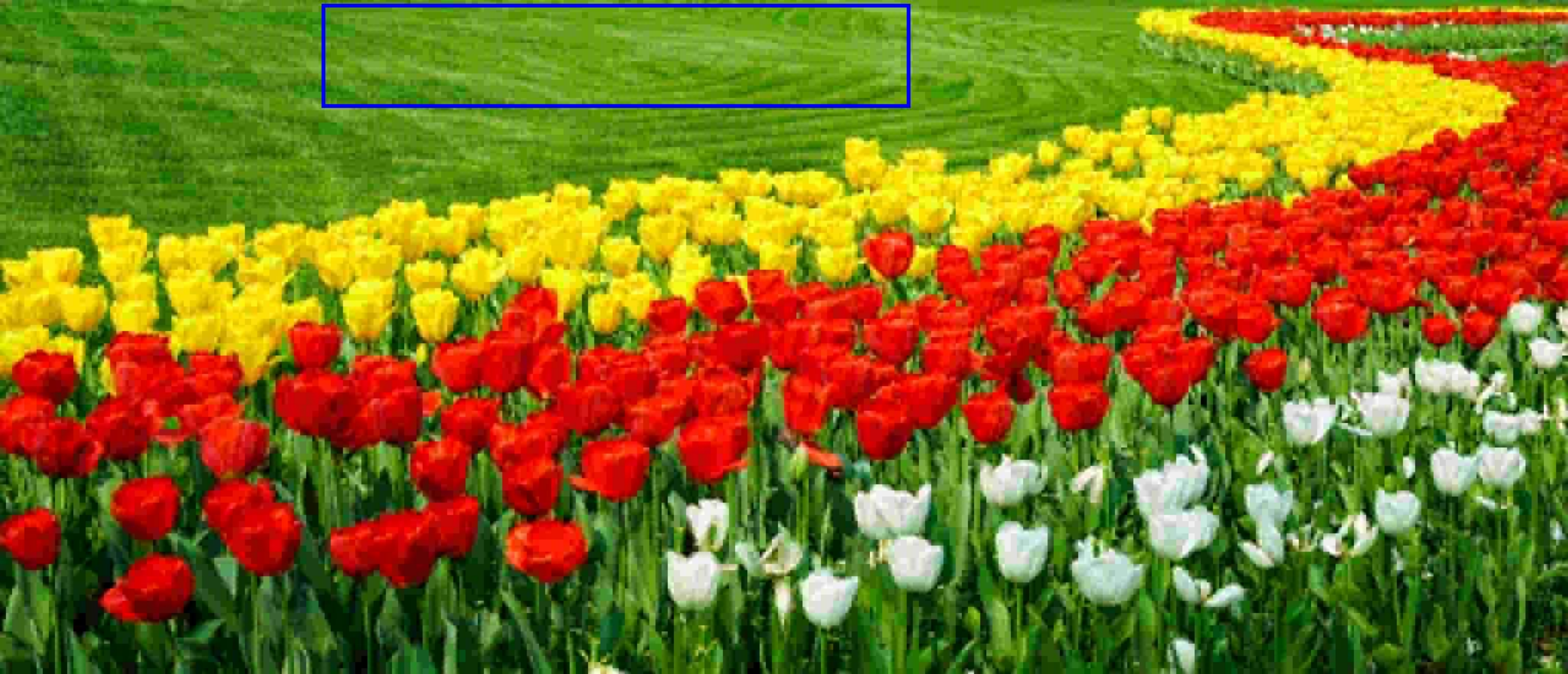}
                \\
                0.17 bpp / 21.93 dB
            }
            &
            \shortstack{
                \includegraphics[
                    width=0.6\columnwidth,
                    valign=m
                ]{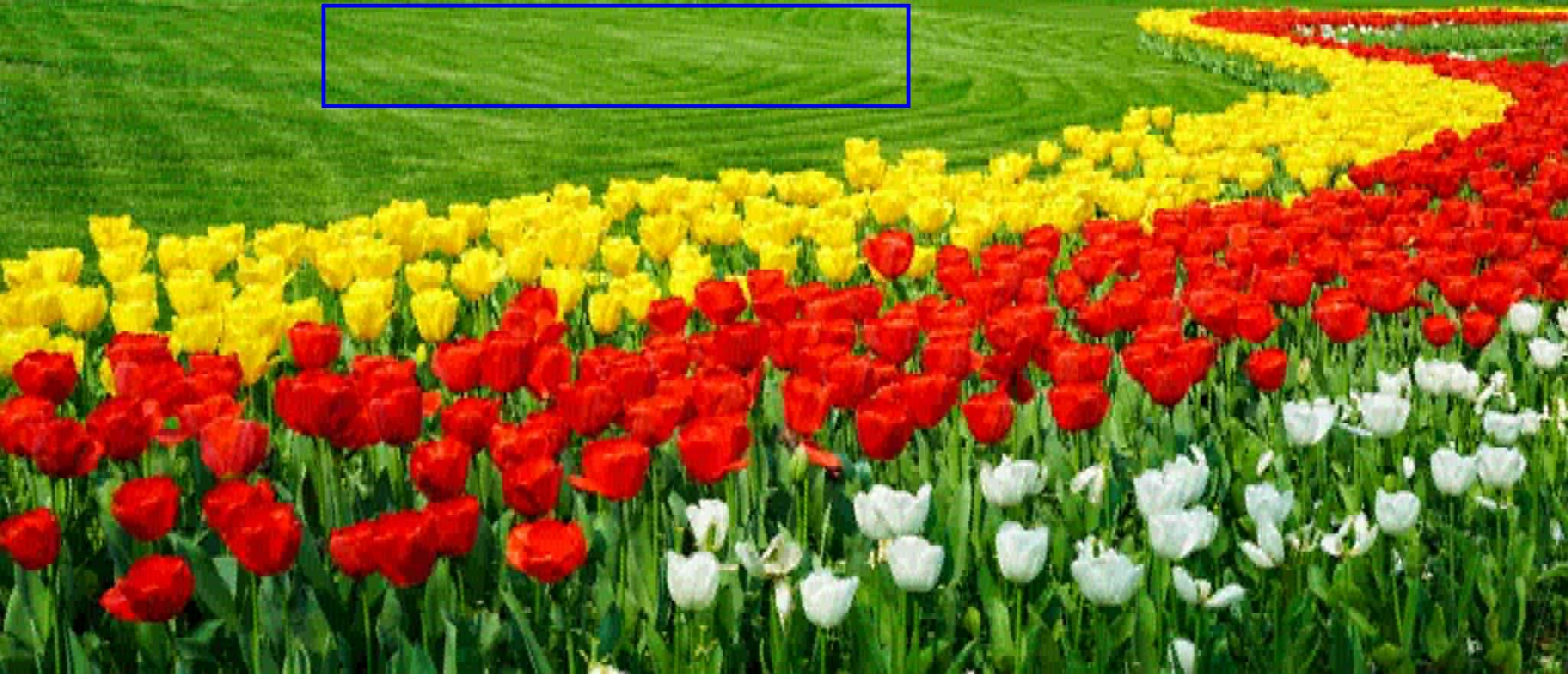}
                \\
                0.17 bpp / 22.51 dB
            }
            \\
            \bottomrule
        \end{tabular}
    \end{multicols}

    \caption{
        Qualitative performance comparison on example images from the
        Kodak and CLIC 2024 datasets. The colored bounding boxes highlight
        the artifacts introduced by the compression methods.
    }
    \label{fig:transposed_comparison}
\end{figure*}

\begin{figure*}[!b]
    \centering
    \includegraphics[width=0.5\columnwidth, height = 0.45\columnwidth]{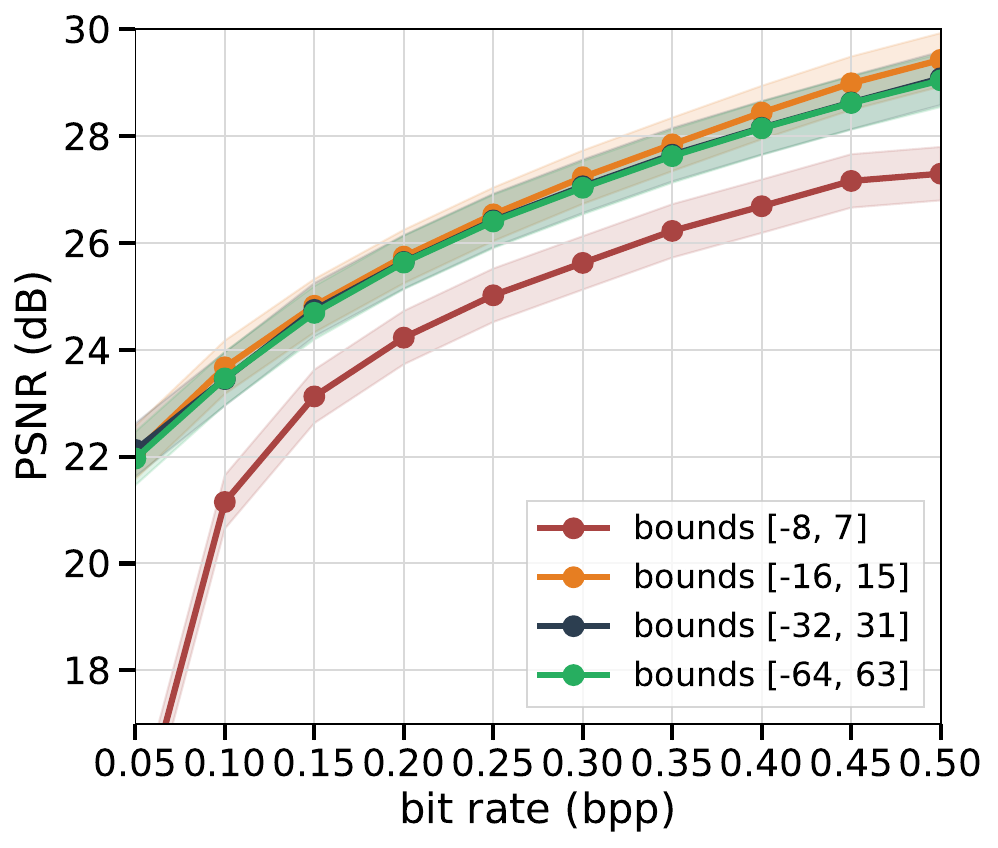} 
\includegraphics[width=0.5\columnwidth, height = 0.45\columnwidth]{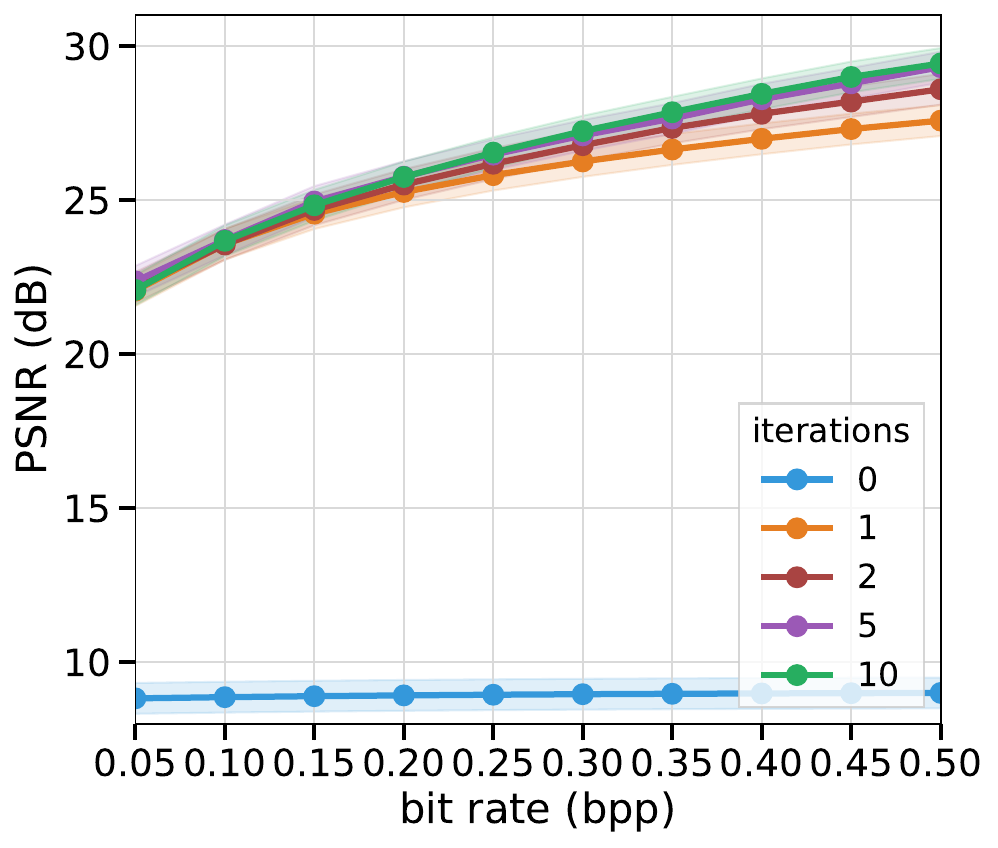}
    \includegraphics[width=0.5\columnwidth, height = 0.45\columnwidth]{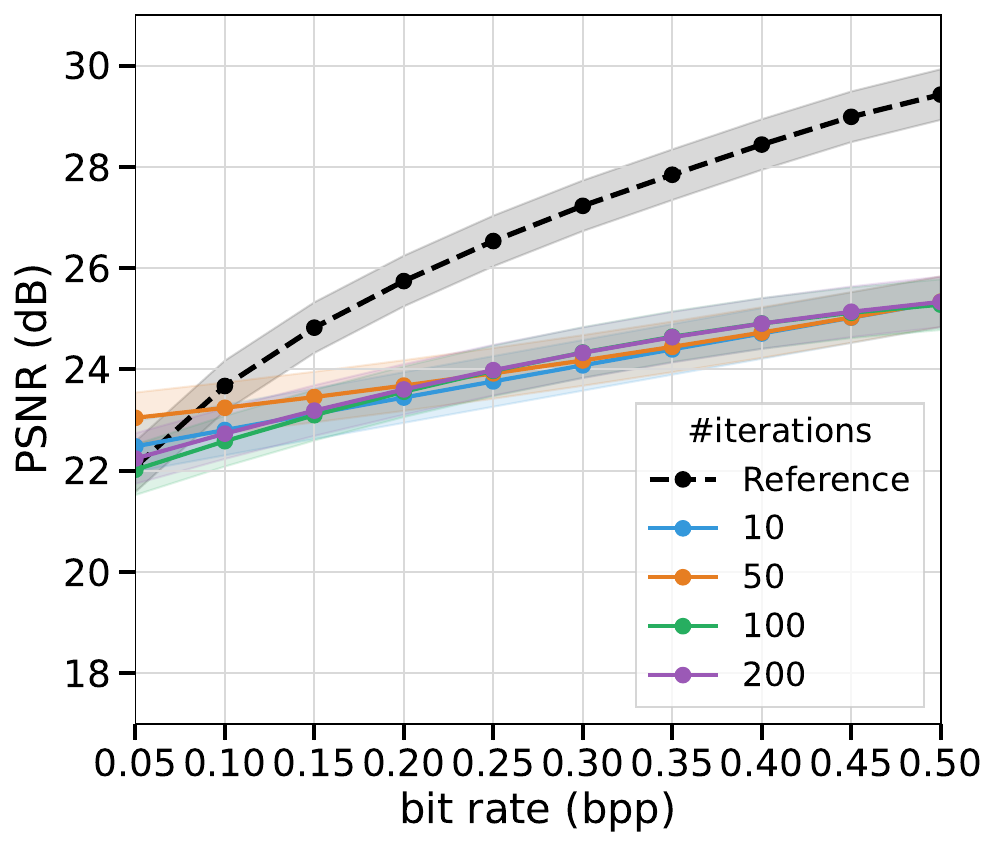}
      \includegraphics[width=0.5\columnwidth, height = 0.45\columnwidth]{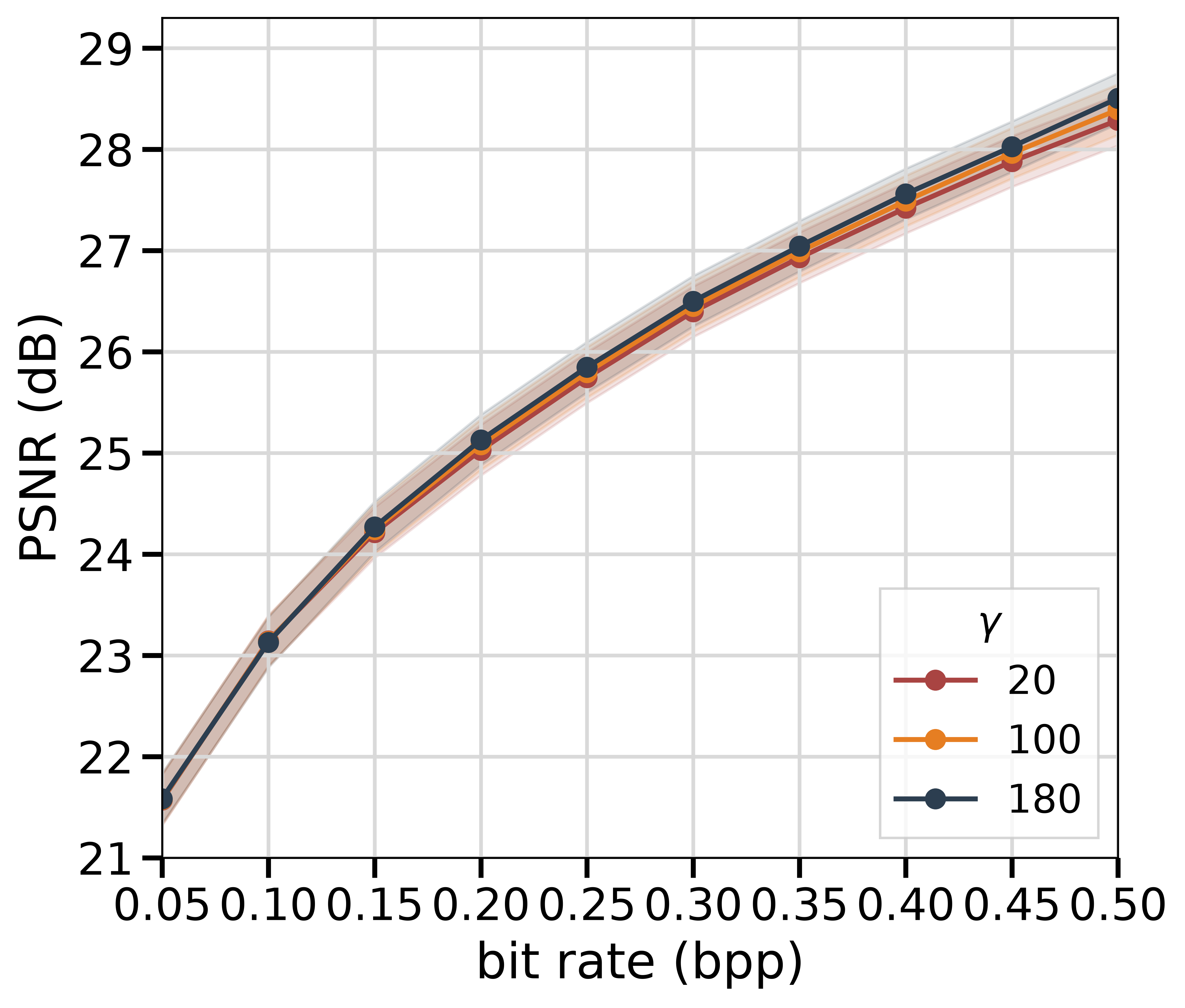}
    \caption{Sensitivity of  RQMF w.r.t. $[\alpha, \beta]$ bounds, number of iterations $K$, initialization of $(U,V)$ and regularization parameters $(M_u,M_v)$ on Kodak images.
    }
    \label{fig3}
\end{figure*}

\noindent In terms of quality Figure \ref{fig:transposed_comparison}  shows that RQMF maintains better visual fidelity than QMF, JPEG and SVD, effectively preserving color fidelity and intricate details, particularly in low-bit-rate scenarios. 
\begin{table}[h!]
\centering
\caption{Mean compression and decompression CPU times for the four methods at bit rates of 0.15 bpp and 0.25 bpp, measured on the Kodak and CLIC 2024 datasets.}
\begin{tabular}{l |c | c c c c}
\toprule
\textbf{Dataset} &
\textbf{Bit rate} &
\textbf{JPEG} &
\textbf{SVD} &
\textbf{QMF} &
\textbf{RQMF} \\
\midrule

\multirow{2}{*}{\textbf{Kodak}}
& 0.15 bpp & 5.4 ms & 1.6 ms & 3.2 ms & 3.2 ms \\
& 0.25 bpp & 5.2 ms & 1.5 ms & 3.1 ms & 3.1 ms \\

\midrule

\multirow{2}{*}{\textbf{CLIC 2024}}
& 0.15 bpp & 29.6 ms & 7.2 ms & 15.2 ms & 14.9 ms \\
& 0.25 bpp & 26.5 ms & 5.9 ms & 13.9 ms & 13.8 ms \\

\bottomrule
\end{tabular}
\end{table}

\noindent  The mean compression and decompression CPU times are reported  in Table 1. The compression and decompression speed of RQMF is comparable to QMF, benefiting from the fast matrix multiplication operations, which are significantly faster than the  DCT operations required by JPEG.


\subsection{Ablation studies}
\noindent We performed ablation studies to assess the influence of the factor bounds $[\alpha, \beta]$, initialization $(U_0,V_0)$, number of iterations ($K$) and regularization parameters $(M_u,M_v)$ on the compression for algorithm RQMF. The results are provided in  Figure  \ref{fig3}. In the left figure, the factor range ($\left[-16,15\right]$) provides the best PSNR–bit-rate trade-off for RQMF. Narrower factor bounds such as $\left[-8,7\right]$ reduce bit allocation and improve compression, but they are overly restrictive. The second figure illustrates the impact of the number of (full) iterations $K$ on the compression. Increasing the number of  iterations improves PSNR, although the gains eventually are insignificant (around $K=10$) and no longer justify the additional computational effort. Note that for $K=0$, RQMF employs no iterations and relies only on the SVD initialization, thus the poor result. In the third figure, we evaluate the impact of the initialization  on the performance of RQMF, executing  different number of iterations ranging from $10$ to $200$. It can be observed that initialization based on truncated SVD with $K=10$ (black line), consistently achieves higher compression quality than random initialization, even after 10 to 200 iterations.  Finally, from the right figure one can notice that our algorithm is robust w.r.t. a wide range of the regularization parameters $(M_u,M_v)$ (in simulations we choose $M_u = M_v$ as in \eqref{eq:M} and vary $\gamma$ for luminance and consider $\gamma+20$ for chrominance).


\subsection{Application of  image  compression on classification}
\noindent To assess the practical impact of our quantized compression scheme beyond rate-distortion metrics, we evaluate the compressed images in a downstream classification task. Our main motivation comes from embedded and edge devices, which typically rely on low-precision (integer) arithmetic due to limited memory, power and computational budgets. In such scenarios, the entire processing chain, from image preprocessing and storage to neural network inference, benefits from an integer representation of the data. Since the factor matrices produced by our RQMF are constrained to bounded integer values, the compressed images can be stored, transmitted and reconstructed using \texttt{INT8} arithmetic\footnote{The factor entries use a 5-bit integer alphabet \([-16,15]\), which can be stored using \texttt{INT8} data types.}, making our method naturally compatible with integer-only inference pipelines. The main benefits of quantization of the entire inference pipeline consist in a memory reduction of almost $4$x, $2$-$4$x speedup in computation and power savings proportional to memory savings (see Table II). \\

\noindent \textit{Network architectures:}  We consider three convolutional network (CNN) architectures representing distinct stages and design principles in the evolution of image classification networks. AlexNet serves as an early, parameter-heavy conventional CNN baseline; ResNet50 represents a deeper residual architecture; and MobileNetV2 represents a lightweight model explicitly designed for mobile and embedded inference. This selection enables us to investigate whether the effects of image compression and post-training quantization are consistent across architectures with substantially different capacities, computational structures and sensitivities to low-precision inference.


\begin{itemize}
\item \textit{AlexNet} \cite{alexnet}: one of the first deep convolutional networks to achieve breakthrough performance on ImageNet. It consists of 5 convolutional layers followed by 3 fully connected layers ($\approx 61$M parameters).

\item \textit{ResNet50} \cite{resnet}: a 50-layer deep residual network 
enabling the training of very deep models ($\approx 25.6$M parameters). 

\item \textit{MobileNetV2} \cite{mobilenetv2}: a lightweight architecture explicitly designed for mobile, embedded and edge devices ($\approx 3.5$M parameters). It replaces standard convolutions with depthwise separable convolutions, 
drastically reducing the number of operations and parameters. 
Hence, MobileNetV2 is the most relevant architecture for  embedded scenarios.
\end{itemize}

\noindent \textit{Evaluation pipeline:}  We consider  ImageNet validation set ($50.000$ images), where compression  is applied to the full-resolution original image, and  then we use the standard evaluation preprocessing steps: resize to $256 \times 256$, center crop to $224 \times 224$,  scale to $[0, 1]$ and finally normalized. For inference we consider two numerical representations of each network: (i) the original pre-trained model using $32$-bit floating point (FP32) arithmetic, and (ii) its post-training quantized (PTQ) counterpart, in which both weights and activations are quantized to $8$-bit integers \cite{gholami2022survey, wang2022niti}. We  run these convolutional networks on RQMF, SVD and JPEG compressed images at matched bit rates. The classification performance is measured via the standard Top-1 and Top-5 accuracies.\\

\noindent \textit{Post-training quantization of the networks:} In order to decrease the storage and compute requirements of the model during inference its parameters are  stored as integers with a low number of bits. To obtain the integer counterparts of the  networks, we employ post-training quantization (PTQ), which is a family of techniques that convert a pre-trained floating point network to low-precision (integer) arithmetic \emph{without any retraining} \cite{jacob2018quantization, krishnamoorthi2018quantizing}. Compared to quantization-aware training, PTQ is simpler to deploy, does not require   access to the original training pipeline, making it the method of choice for embedded and edge scenarios. For the reader’s convenience, in what follows we briefly outline the principles and procedures underlying post-training quantization of neural networks based on the works \cite{jacob2018quantization, krishnamoorthi2018quantizing}.
In \emph{weight and activation quantization}, the network weights and the activations are converted to low precision (e.g., \texttt{INT8}), which enables integer-only inference. Our presentation of  PTQ below follows  \cite{krishnamoorthi2018quantizing}.

\medskip
\noindent\textit{Uniform affine quantization.}
Let $x \in [x_{\min}, x_{\max}]$ be a real-valued quantity (a weight or
an activation) to be mapped onto $b$-bit integers with
$N = 2^{b}$ representable levels ($N = 256$ for \texttt{INT8}). The
\emph{uniform affine quantizer} is parameterized by a scale
$\Delta > 0$ and an integer zero-point $z$:
\begin{equation}
\Delta \;=\; \frac{x_{\max} - x_{\min}}{N - 1},
\qquad
z \;=\; \operatorname{round}\!\left(-\frac{x_{\min}}{\Delta}\right),
\label{eq:scale-zp}
\end{equation}
and maps a real value $x$ to its integer representation via
\begin{equation}
x_{\mathrm{int}}
\;=\;
\operatorname{clamp}\Bigl(0,\, N-1,\;
\operatorname{round}\bigl(\tfrac{x}{\Delta}\bigr) + z\Bigr).
\label{eq:quant}
\end{equation}
This zero-point $z$ guarantees that the real value $0$ is represented \emph{exactly}, an important property for common operations such as zero padding; for one-sided
distributions the range $[x_{\min}, x_{\max}]$ is relaxed to include
zero. A frequently used simplification is the \emph{symmetric} quantizer,
which fixes $z = 0$ and uses a range symmetric around zero:
\begin{align*}
&\Delta \;=\; \frac{\max\bigl(|x_{\min}|, |x_{\max}|\bigr)}{N/2 - 1}\\
&x_{\mathrm{int}} \;=\;
\operatorname{clamp}\Bigl(-\tfrac{N}{2},\, \tfrac{N}{2}-1,\;
\operatorname{round}\bigl(\tfrac{x}{\Delta}\bigr)\Bigr),
\label{eq:symmetric}
\end{align*}
which removes the cross-terms involving the zero-point in  integer
convolution and thereby simplifies fast Single Instruction Multiple Data implementations at the cost of a slightly less efficient use of  integer grid for skewed distributions.

\medskip
\noindent\textit{Granularity of quantization.}
A key design choice in  PTQ is the \emph{granularity} of the quantizer. With
{per-layer} (per-tensor) quantization, a single pair
$(\Delta, z)$ is used for an entire weight tensor
$W \in \mathbb{R}^{K \times K \times C_{\mathrm{in}} \times C_{\mathrm{out}}}$,
whereas {per-channel} quantization assigns an individual scale to
each convolutional kernel, i.e., in  symmetric case to each output channel
$c \in \{1, \dots, C_{\mathrm{out}}\}$:
\begin{equation}
\Delta_{c} \;=\;
\frac{\max\limits_{k,l,m}\bigl|W(k,l,m;c)\bigr|}{N/2 - 1},
\label{eq:per-channel}
\end{equation}
Since  scale is constant within each kernel, both granularities remain compatible with efficient integer dot-product and convolution kernels; activations, in contrast, are quantized per-tensor, as per-channel activation scales  complicate  inner accumulation of {convolutions} and {matrix multiplication}~operations.

\medskip
\noindent\textit{Range estimation.}
For weights, the quantizer parameters are obtained directly from the
empirical extrema, $x_{\min} = \min_i w_i$ and $x_{\max} = \max_i w_i$
(per tensor or per channel). Since activation ranges are input-dependent, they must be estimated in a short {calibration} step, in which a small set of representative images (typically around one hundred mini-batches) is propagated through the network and the dynamic ranges of each layer are collected   as exponential moving averages of the per-batch  extrema:
\begin{align}
&x_{\min}^{(t)} = (1-\alpha)\, x_{\min}^{(t-1)} + \alpha \min(x^{(t)}), \nonumber \\
&x_{\max}^{(t)} = (1-\alpha)\, x_{\max}^{(t-1)} + \alpha \max(x^{(t)}),
\label{eq:ema}
\end{align}
with smoothing factor $\alpha \in (0,1)$; more elaborate criteria, such
as selecting the clipping range that minimizes the Kullback--Leibler
divergence between the original and the quantized distribution, can
further improve accuracy \cite{krishnamoorthi2018quantizing}. 

\medskip
\noindent As shown in \cite{krishnamoorthi2018quantizing}, per-channel
quantization of weights combined with per-layer quantization of
activations achieves classification accuracies within $2\%$ of the
floating point baseline for a wide range of architectures at $8$ bits. 
Larger, over-parameterized networks (e.g., ResNets) are generally more robust
to quantization than compact ones (e.g., MobileNet). In this work we adopt the following PTQ recipe: symmetric per-channel quantization of weights (see eq.~\eqref{eq:per-channel}), asymmetric per-tensor quantization of
activations (see eqs.~\eqref{eq:scale-zp}--\eqref{eq:quant}) and
calibration-based range estimation (see eq.~\eqref{eq:ema}) - to obtain the
\texttt{INT8} counterparts of the networks used in our classification
experiments.

\begin{figure}
    \centering
    \includegraphics[scale=0.25]{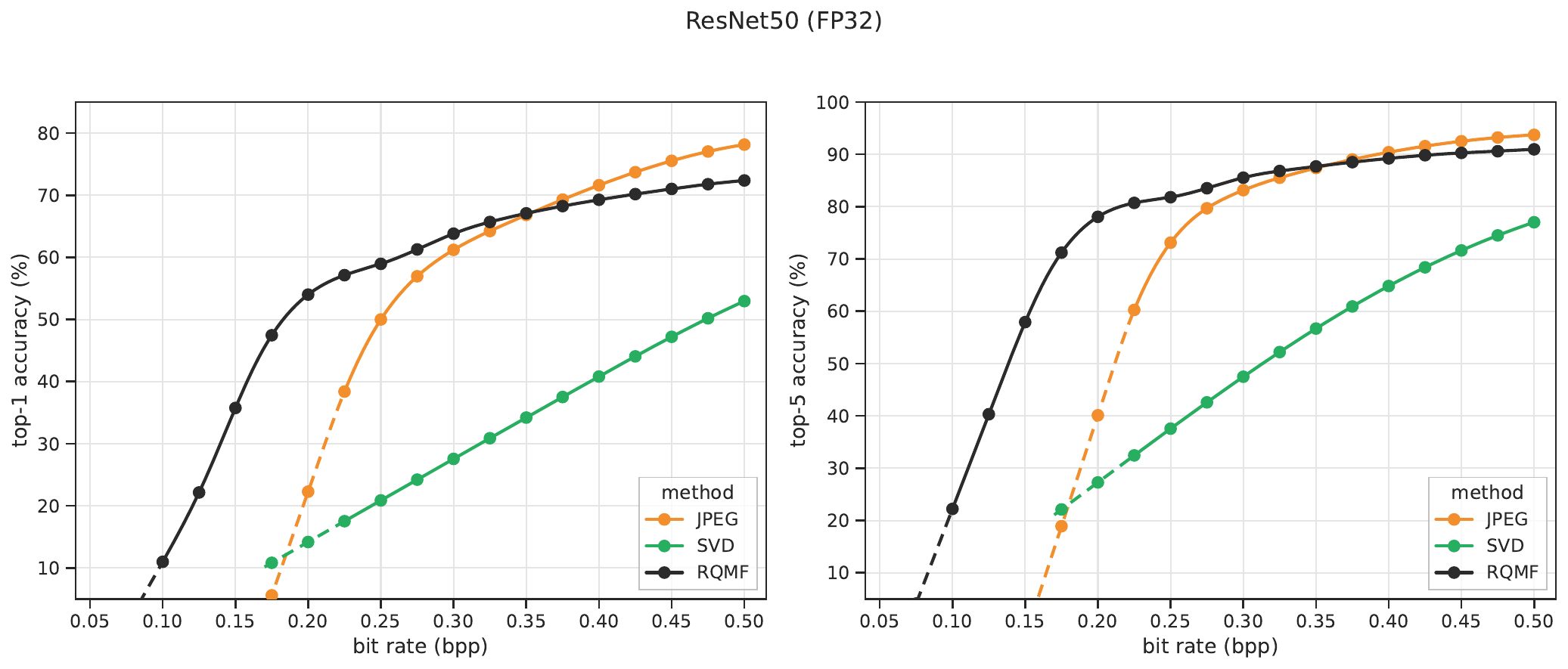}
    \includegraphics[scale=0.25]{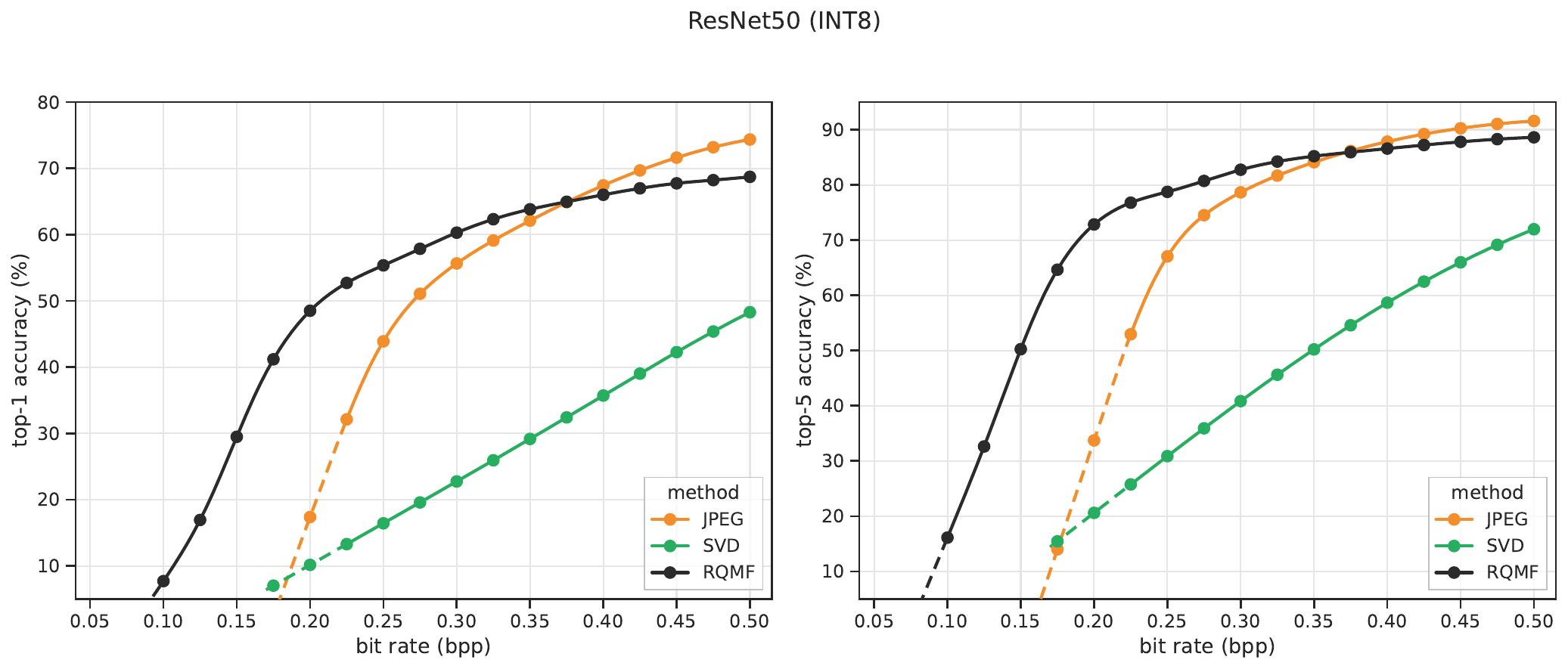}
    \includegraphics[scale=0.25]{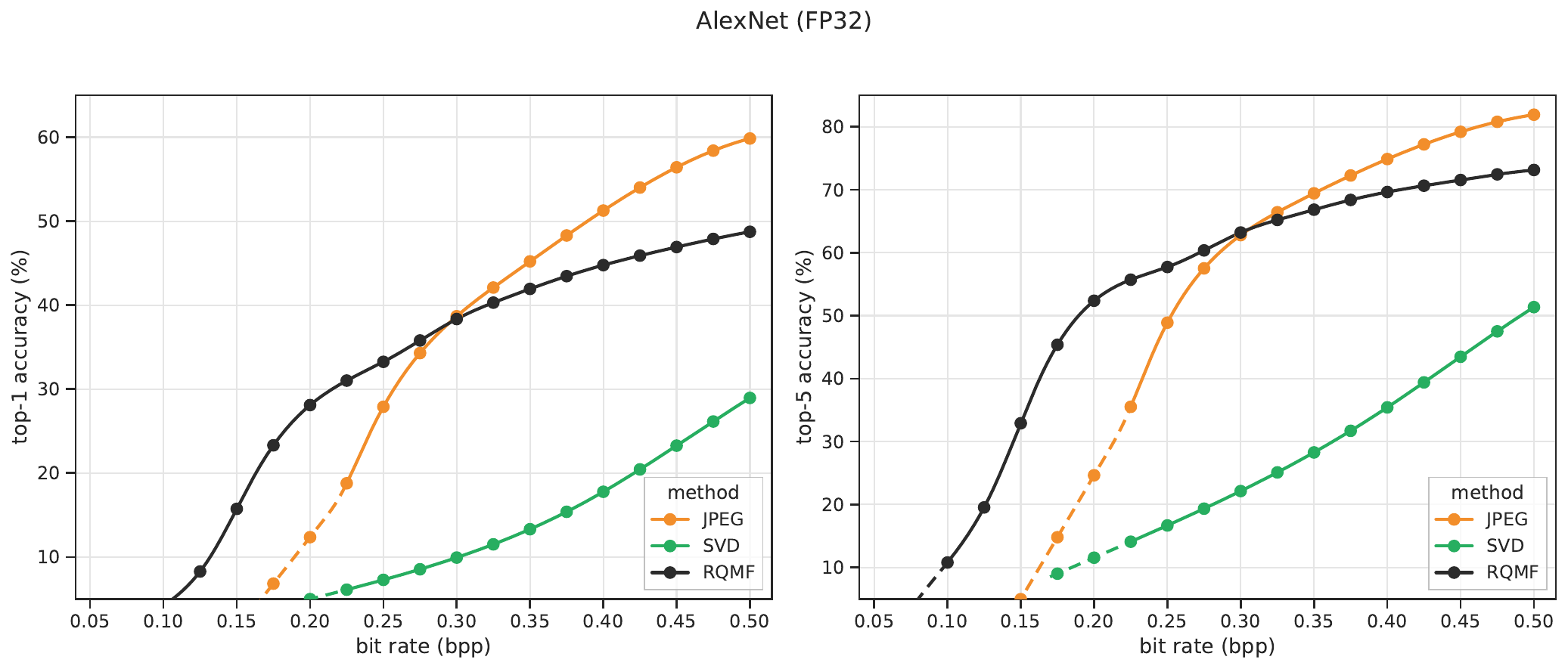}
    \includegraphics[scale=0.25]{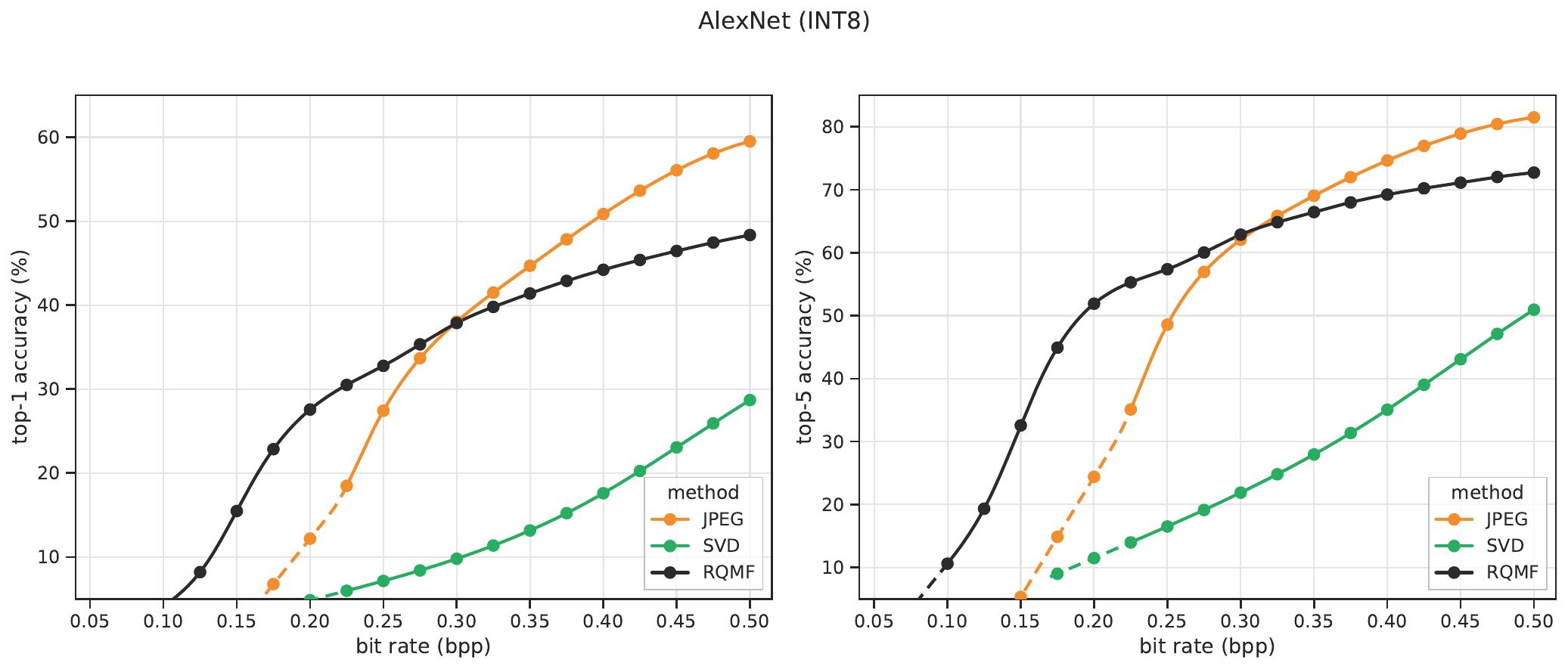}
    \includegraphics[scale=0.25]{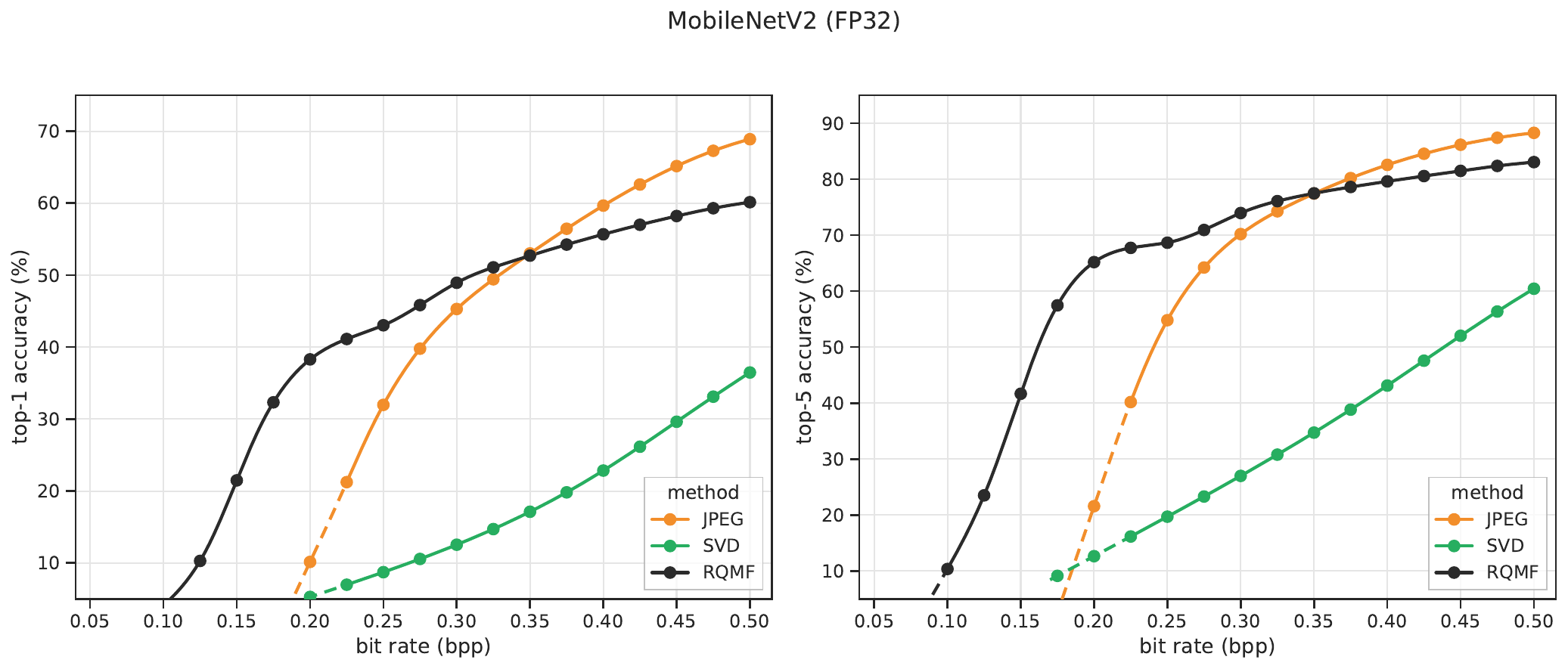}
    \includegraphics[scale=0.25]{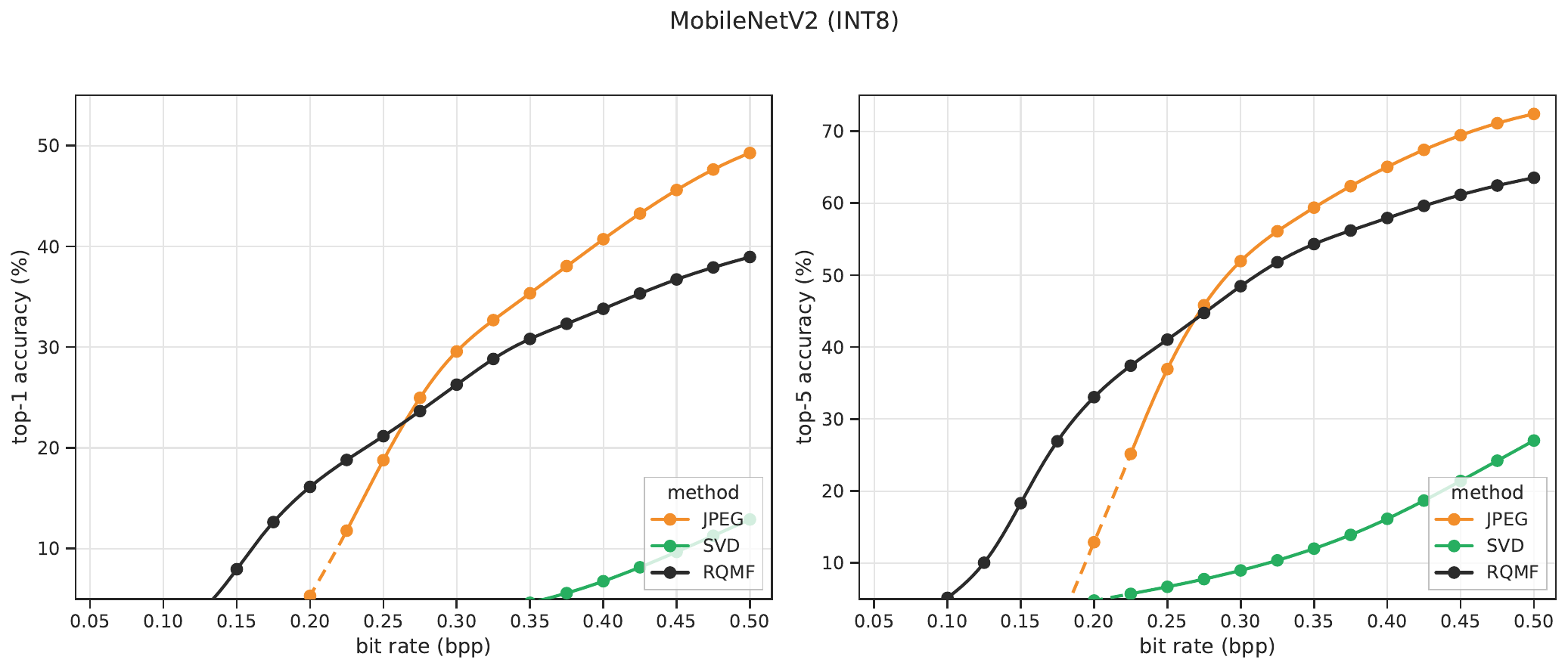}
    \caption{Top-1 (left) and top-5 (right) accuracies  on the ImageNet validation set at several bit rates for ResNet50, AlexNet and MobileNetV2 in both FP32 and INT8 representations.}
    \label{classification}
\end{figure}

\medskip

\noindent \textit{Numerical results for classification}.
Figure \ref{classification} and Table \ref{tab:results_025bpp} 
report Top-1 and Top-5 accuracies of the three networks (AlexNet,
MobileNetV2 and ResNet50), in both \texttt{FP32} and \texttt{INT8}
representations, on the original ImageNet dataset and also compressed with RQMF, JPEG and SVD at different bit rates. First, comparing the two numerical representations,
post-training quantization is virtually lossless for AlexNet and ResNet50
(at most a $4$-point Top-1 drop), confirming that the quantization error is
negligible at the classification level for these architectures. MobileNetV2
is more sensitive to quantization, as expected for compact models built on
depthwise separable convolutions, which are notoriously challenging for
standard post-training quantization; notably, this gap is already present on
uncompressed images and is therefore independent of the image codec. Second,
the images compressed with RQMF preserve the semantic content required for
classification: RQMF yields higher accuracies than JPEG at low bit rates
(below ${\sim}0.3$~bpp) for all networks and both precisions, consistent
with the rate-distortion results of the previous section. Table \ref{tab:results_025bpp} showcases the performance of the three networks at low bit rate (0.3 bpp) compression on ImageNet. One can notice that the average inference time per image (including memory access, transfer times and preprocessing  in order to simulate a real embedded system scenario) has a 2-3x speedup for \texttt{INT8} representation of the networks.  Moreover, one can observe approximately 4x reduction in memory usage. Overall, these results indicate that the proposed integer-based compression (algorithm RQMF)  can be combined with integer-only network inference into a fully quantized end-to-end pipeline suitable for embedded devices, with negligible impact on the
classification performance.

\begin{table}[h]
\centering
\caption{Top-1 and top-5 accuracies and average inference time on the ImageNet validation set at a target bit rate of approximately $0.30$~bpp. }
\label{tab:results_025bpp}
\begin{tabular}{l|l|l|l|p{0.5cm}p{0.5cm}p{0.7cm}p{0.7cm}}
\toprule
CNNs & Param. & Prec. & Meth. & \shortstack{Top-1\\ (\%)} & \shortstack{Top-5 \\(\%)} & \shortstack{Time \\(ms/img)} & \shortstack{Disk \\(MB)} \\
\midrule
\multirow{6}{*}{\rotatebox{90}{ResNet50}} & \multirow{6}{*}{25.6M} & \multirow{4}{*}{FP32} & Orig. & 80.34	& 95.13 & 27.61 & \multirow{4}{*}{97.79} \\
&  &  & JPEG & 59.57 & 81.85 & 14.70 &\\
&  &  & SVD & 14.20 & 27.33 & 21.01 &  \\
&  &  & RQMF & 64.15 & 85.80 & 22.34 &  \\
\cmidrule(lr){3-8}
&  & \multirow{4}{*}{INT8} & Orig. & 77.36	&93.63
 & 13.87 & \multirow{4}{*}{24.95} \\
&  &  & JPEG & 53.85 & 77.05 & 10.45 \\
&  &  & SVD & 10.17 & 20.64 & 10.19 &  \\
&  &  & RQMF & 60.63 & 83.01 & 10.55 &  \\
\midrule
\multirow{6}{*}{\rotatebox{90}{MobileNetV2}} & \multirow{6}{*}{3.5M} & \multirow{4}{*}{FP32} & Orig. & 72.01 &90.62 & 27.86 & \multirow{4}{*}{13.60} \\
&  &  & JPEG & 43.14 & 67.92 & 14.39 & \\
&  &  & SVD & 5.30 & 12.68 & 21.17 &  \\
&  &  & RQMF & 49.34 & 74.36 & 22.53 &  \\
\cmidrule(lr){3-8}
&  & \multirow{4}{*}{INT8} & Orig. & 52.79&	76.39
 & 10.08 & \multirow{3}{*}{3.75} \\
&  &  & JPEG & 27.81 & 49.61 & 6.41 & \\
&  &  & SVD & 1.75 & 4.79 & 8.22 &  \\
&  &  & RQMF & 26.68 & 49.01 & 8.60 &  \\
\midrule
\multirow{6}{*}{\rotatebox{90}{AlexNet}} & \multirow{6}{*}{61.1M} & \multirow{4}{*}{FP32}& Orig. & 56.51 &	79.07
 & 28.53 & \multirow{4}{*}{233.09} \\
&  &  & JPEG & 36.95 & 60.78 & 13.73 & \\
&  &  & SVD & 4.99 & 11.58 & 22.50 &  \\
&  &  & RQMF & 38.69 & 63.57 & 23.97 &  \\
\cmidrule(lr){3-8}
&  & \multirow{4}{*}{INT8} & Orig. & 56.16	&78.87 & 9.22 & \multirow{4}{*}{58.47} \\
&  &  & JPEG & 36.30 & 60.07 & 5.29 &  \\
&  &  & SVD & 4.87 & 11.51 & 7.26 & \\
&  &  & RQMF & 38.21 & 63.23 & 7.69 &  \\
\bottomrule
\end{tabular}
\end{table}


\section{Conclusions}
In this paper we have combined cyclic coordinate minimization with quadratic regularization that leads to a new algorithm for  solving  general (possibly nonconvex) composite  optimization problems.  We have performed  a full  convergence  analysis  of  our  method deriving rates ranging from sublinear to superlinear, depending on the assumptions on the problem. Then, we have specialized our algorithm to  quantized matrix factorization problems that arise in e.g.,   image compression. Finally, we have evaluated both  the original and  quantized compressed images in a classification task using several  well-known convolutional networks. The numerical results on image compression and classification using real data have shown the flexibility and  efficiency of our  algorithm when compared to well-established methods from the literature.

\bibliographystyle{abbrv}
\bibliography{ifacconf}            
\end{document}